\documentclass{article}
\usepackage{graphicx} 
\usepackage{amsthm}
\usepackage{amsmath}
\usepackage{amssymb}
\usepackage[margin=1.2in]{geometry}
\usepackage{hyperref}
\usepackage[shortlabels]{enumitem}
\usepackage{parskip}
\usepackage{subcaption}

\newtheorem{thm}{Theorem}
\newtheorem{prp}{Proposition}
\newtheorem{cor}{Corollary}
\newtheorem{lem}{Lemma}[section]
\theoremstyle{definition}
\newtheorem{dfn}{Definition}
\newtheorem{rem}[lem]{Remark}
\numberwithin{equation}{section}

\newcommand{\Ss}{\mathbb{S}}
\newcommand{\RR}{\mathbb{R}}

\newcommand{\NN}{\mathbb{N}}
\newcommand{\pare}[1]{\left(#1\right)}
\newcommand{\Pare}[1]{\big(#1\big)}
\newcommand{\PAre}[1]{\Big(#1\Big)}
\newcommand{\abs}[1]{\left|#1\right|}

\newcommand{\AND}{\qquad\text{and}\qquad}
\newcommand{\AnD}{\quad\text{and}\quad}

\DeclareMathOperator{\Supp}{supp}
\DeclareMathOperator{\Conv}{conv}

\newcommand{\tS}[1]{\text{supp}_t #1}

\newcommand{\usc}{u_\text{sc}}
\newcommand{\uin}{u_\text{in}}
\newcommand{\Cor}{\mathcal{K}}
\newcommand{\wl}{w^{(l)}}

\title{Split Corners in Scattering and Inverse Scattering}
\author{Hayden Ruff \thanks{Department of Mathematics, Drexel University, Philadelphia, PA, USA. Email: hr442@drexel.edu} \and Jingni Xiao \thanks{Department of Mathematics, Drexel University, Philadelphia, PA, USA. Email: jingni.xiao@drexel.edu}}

\begin{document}
\maketitle
\begin{abstract}
	We study scattering and inverse scattering generated by sources and penetrable media with corner singularities. 
	We introduce the notion of split corners, a local model that unifies geometric singularities and coefficient discontinuities arising in source and medium scattering. 
	Within this framework, we establish scattering and inverse scattering results that extend the classical theory to split corners, allowing the source or medium contrast to approach different limiting values in distinct sectors meeting at the corner tip. 
	For source scattering, we prove that every admissible two-split corner necessarily radiates in a general bounded inhomogeneous background, thereby generalizing the classical corner radiation principle to configurations involving both geometric corners and jump discontinuities. 
	For penetrable media, we establish analogous results together with explicit compatibility conditions for incident waves of arbitrary vanishing order. As consequences, we obtain several uniqueness results in inverse source and inverse medium scattering, including recovery of polygonal convex hulls from a single far-field measurement.
\end{abstract}

\section{Introduction}
Corner singularities play an important role in scattering theory and inverse scattering. A fundamental discovery over the past decade is that geometric singularities such as corners and edges are incompatible with invisibility. In other words, under suitable assumptions, they necessarily generate nontrivial scattered waves. 
This phenomenon has led to a number of uniqueness results for recovering the geometry from limited scattering measurements.

For penetrable media, corner scattering was first established in the pioneering works of Bl{\aa}sten, P{\"a}iv{\"a}rinta and Sylvester \cite{BPS14}, who showed that incident waves cannot pass through right-angled corners without scattering. Since then, the corner scattering principle has been generalized to strictly convex corners \cite{PSV17} and more general polygonal and polyhedral corners \cite{EH18}. 
Analogous corner scattering phenomena have been established for source problems as well as electromagnetic and elastic systems \cite{LX17,Bla18,BL19,BLX21,DGT25}. Similar phenomena have also been proved for operators in divergence form, anisotropic media, and conductive transmission problems; see, for example, \cite{CX21,BL21,DCL21,DDL22,CHLX25}. These results demonstrate that geometric singularities are a robust mechanism for producing scattering across a broad range of wave models. 
More recently, it has been shown that even weaker geometric singularities lead to nontrivial scattering under suitable admissibility assumptions; see, for instance, \cite{CV23,SS21,CVX23,KSS24}. This corner scattering principle has since been extended to inverse problems on the unique determination of the convex polygonal shapes of sources and media using a single scattering measurement\cite{HSV16,HL20,CX21,BL21,DFLY24,DFL25}. 

The common feature of the above results is that the source or medium contrast approaches a single limiting value near the singularity, so that the scattering mechanism is driven entirely by the geometry of the corner.
In many applications, however, the material parameters themselves may be discontinuous near the geometric singularity. Typical examples include composite materials, piecewise-constant media, interfaces meeting at a common vertex, and source distributions with jump discontinuities. In such situations, geometric and coefficient singularities coexist and interact, and it is not clear whether the classical corner scattering mechanism remains valid. Despite their practical relevance, such configurations fall outside the scope of existing corner scattering theory, where the coefficient is typically assumed to approach a single limiting value near each corner. Consequently, the existing theory treats geometric singularities and coefficient discontinuities separately. Our notion of split corners provides a common framework encompassing both phenomena, allowing the local scattering mechanism to be analyzed without distinguishing between purely geometric and purely coefficient singularities.

To study such combined geometric and coefficient singularities, we introduce the notion of split corners, where the source or medium contrast approaches different limiting values in distinct sectors meeting at the corner tip. From a geometric point of view, split corners interpolate between classical geometric corners and flat interfaces carrying jump discontinuities. This class includes classical corners, flat interfaces with jump discontinuities, and mixed configurations involving both geometric and coefficient singularities. 

The central theme of this work is that split corners provide a unified local model for scattering from geometric and coefficient singularities. 
Rather than treating these as separate mechanisms, we view them as different manifestations of the same local singular structure. Accordingly, the source and medium scattering problems are governed by a common analytical framework based on local integral identities and complex geometrical optics solutions. Classical corner scattering corresponds precisely to the special case of admissible 1-split corners, so split corners provide a genuine extension of the classical theory rather than a parallel generalization.

We first develop this theory for source scattering. We prove that any admissible two-split corner necessarily radiates. The result extends the corner-radiation principle of \cite{Bla18} to  a broader class of singular configurations that include both geometric corners and flat interfaces carrying jump discontinuities. 
The proof is based on local corner integral identities together with complex geometrical optics (CGO) solutions introduced in~\cite{Xiao_2022}, which enable an explicit asymptotic analysis near the corner. As consequences of these local results, we establish several uniqueness results for inverse source problems. Specifically, we show that exposed corners of two sources producing identical far-field patterns must satisfy strong compatibility conditions. This leads to the unique determination of the polygonal convex hull of sources from a single far-field measurement. 

We then extend the same framework to penetrable media. While the analytical approach is similar, the interaction between the incident wave and the medium introduces a richer local structure. We first prove that admissible two-split corners always scatter incident waves that are nonzero at the corner tip. More generally, for incident waves of arbitrary vanishing order we derive explicit compatibility conditions relating the split-corner geometry, the limiting values of the contrast, and the vanishing order of the incident field. These compatibility conditions characterize the exceptional configurations under which corner scattering may fail and imply that broad classes of split corners necessarily scatter. As consequences, we obtain uniqueness results for inverse medium scattering, including recovery of polygonal convex hulls from a single far-field measurement.

The remainder of the paper is organized as follows. Section~\ref{sec:source} develops the theory for source scattering and derives its inverse consequences. Section~\ref{sec:medium} treats penetrable media, establishes the corresponding scattering results, and proves the inverse uniqueness theorems.

\section{Source Scattering}\label{sec:source}
Consider the scattering problem due to the presence of a source $f\in L^2(\RR^2)$ given by
\begin{equation}\label{eq:sourcescattering}
\begin{cases}
    \Delta u_\text{sc}+k^2\rho u_\text{sc}=f \qquad\text{in $\RR^2$},& \\
    \lim_{r\to\infty} r^{1/2}\left(\frac{\partial u_\text{sc}}{\partial r}-ik u_\text{sc}\right)=0, &
\end{cases}
\end{equation}
where $r=|x|$, $\rho\in L^\infty(\RR^2)$ signifies the background medium, and $f$ and $\rho-1$ are both compactly supported in $\RR^2$. The second equation in \eqref{eq:sourcescattering} is the Sommerfeld radiation condition, which is satisfied uniformly in all directions $\hat{x}=x/r\in\Ss^1$. Thanks to the radiation condition, the scattered field admits an asymptotic behavior at infinity, that is
\begin{equation*}
	\usc(x)=|x|^{-1/2}e^{i k|x|}\pare{u_\infty(\hat x)+O(|x|^{-1})},\qquad |x|\to\infty,
\end{equation*}
where $u_\infty$ is referred to as the far-field pattern. Moreover, the source scattering problem \eqref{eq:sourcescattering} admits a unique solution $\usc\in H^1_\text{loc}(\RR^2)$. 

In inverse scattering, one measures the far-field pattern $u_\infty$, and tries to recover the information of the source function $f$. However, there is a class of so-called ``non-radiating'' sources which produce trivial far-field $u_\infty\equiv 0$. 
Thus an exterior observer will not be able to ``see'' anything as if the source $f$ did not exist. A typical example of non-radiating source is constructed by considering a solution $\phi$ to $\Delta \phi +k^2\rho\phi =0$ in $\RR^2$ and a cutoff function $\chi\in C_c^{2}(\RR^2)$, and let $f:=(\Delta  +k^2\rho)(\chi\phi)$. Then $\Supp f\subseteq\Supp\chi$ and $\usc=\chi\phi$ is the unique solution to \eqref{eq:sourcescattering}, which has a trivial (i.e., identically zero) far-field. Notice in this case that $f=0$ on $\partial \Supp f$ generally. 
The example above shows that non-radiating sources may exist when the source vanishes at the boundary of its support. In contrast, existing corner-radiation results indicate that nontrivial boundary values at geometric corners prevent such invisibility.

Note that $\Supp f$ generally have ``holes'' where $f$ is zero. The following definition of ``total support'' removes such holes from the support; see also, the notion of infinity support in \cite{KusSyl03}.
\begin{dfn}\label{dfn:scattersupport}
	Given $f\in L^2(\RR^2)$ with compact support, let $G$ be the unbounded connected complement of $\RR^2\backslash\Supp f$. The \textit{total support} of $f$ is defined as $\tS f:= \RR^2\backslash G$.
\end{dfn}
\begin{figure}[h]
\begin{subfigure}{0.5\textwidth}
\centering\includegraphics[width=0.4\linewidth]{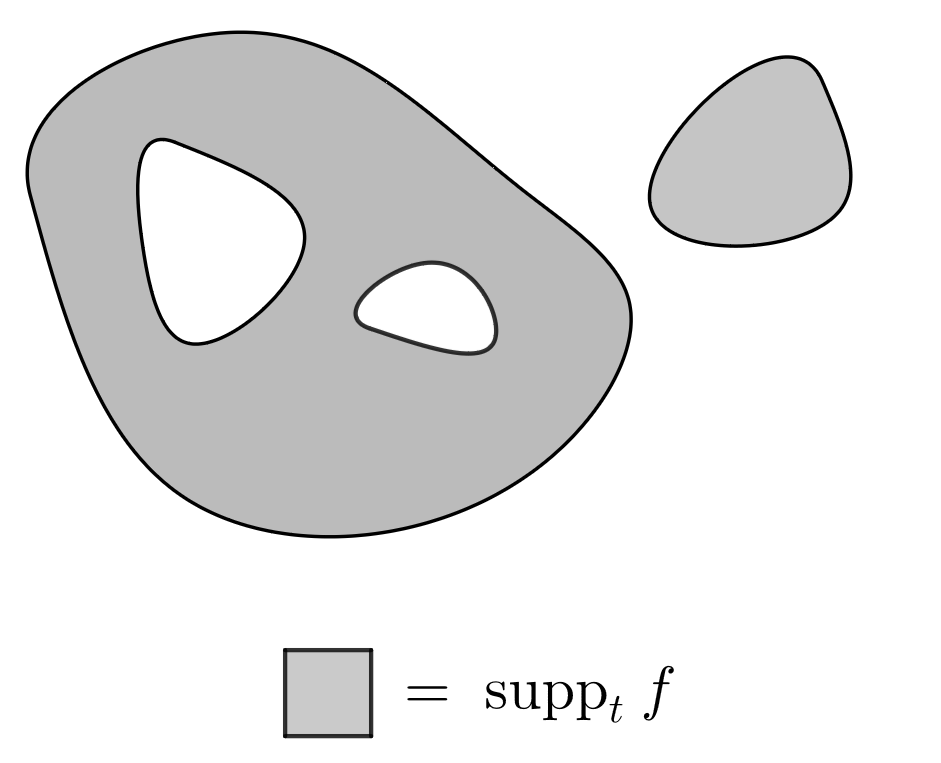} 
\label{fig:suppf}
\end{subfigure}
\begin{subfigure}{0.5\textwidth}
\centering\includegraphics[width=0.4\linewidth]{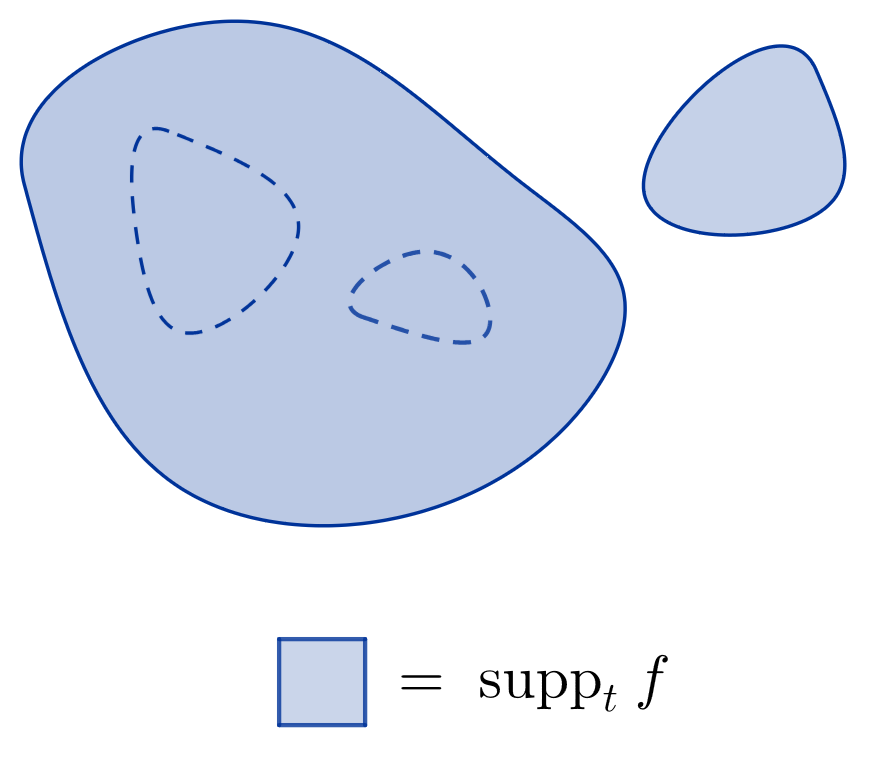}
\label{fig:tsuppf}
\end{subfigure}
\caption{A visual comparison of supp$f$ and $\text{supp}_t f$. }
\label{fig:supptsupp}
\end{figure}

In this paper, by a \textit{source} we refer to a compactly supported $f\in L^2(\RR^2)$, or the pair $(f,\Sigma):=(f,\tS f)$.  
Given a compact set $\Sigma$, we denote $\Sigma^\circ$ as its interior.
We also define the circular wedge $\mathcal{K}_{\theta_0,\theta_1, R}:=\{x\in \RR^2:\;\theta_0<\theta<\theta_1,0<r<R\}$ with $R\in\RR_+$ and $\theta_0,\theta_1\in [-\pi,\pi)$ such that $0<\theta_1-\theta_0<2 \pi$.

We are particularly interested in understanding what happens when the boundary singularity is no longer purely geometric, but is accompanied by discontinuities of the source itself. This motivates the notion of a split corner introduced below.
The purpose of introducing split corners is to describe singularities arising from two independent mechanisms. The first is the geometric singularity created by a non-flat corner, while the second is a discontinuity in the limiting value of the source or medium across an interface. Either mechanism can generate scattering, whereas configurations lacking both mechanisms behave locally like smooth interfaces and therefore are not expected to satisfy a corner scattering principle. This motivates the admissibility conditions introduced below, which are designed to isolate precisely those configurations possessing at least one detectable singular feature.
\begin{dfn}\label{dfn:splitcorner}
	Given a domain $D$ in $\RR^2$, we say that $D$ has a \textit{corner} $\Cor$ if, up to a rigid change of coordinates, $0\in\partial D$ and $B_R(0)\cap D=\Cor=\mathcal{K}_{\theta_0,\theta_1,R}$ for some $R>0$. The \textit{aperture} of the corner is $\omega:=\theta_1-\theta_0$. We say that a source $(f,\Sigma)$ has a \textit{corner} or is \textit{cornered} if $\Sigma^\circ$ has a corner.

 Given a source $f$ with a corner $\Cor$, we say that $f$ is \textit{$n$-split} at $\Cor$, or $\Cor$ is an $n$-split corner of $f$, if there is a strictly increasing sequence $\{\theta_j\}_{j=0,\dots,n}\subset[-\pi,\pi)$ such that $\Cor=\mathcal{K}_{\theta_0,\theta_n,R}$ and 
 \begin{equation*}
 	|f(x)-p_j|\leq C_j|x|^{\alpha_j} \quad\text{a.e. in $\mathcal{K}_{\theta_{j-1},\theta_j,R},\quad$ $j=1,\dots, n,$}
 \end{equation*}
for some real numbers $p_j$\footnote{We assume, for simplicity of presentation, that the source term $f$ is real-valued. However, all results remain valid for complex-valued $f$, since equation \eqref{eq:sourcescattering} (and almost all the other ones in this article) can always be separated into its real and imaginary components.} and positive constants $C_j$ and $\alpha_j$.
    We refer to $\omega_j:=\theta_j-\theta_{j-1}$ as the \textit{aperture of the $j$-th split}, and the vector $(f_j(0))_{j=1,\dots,n}:=(p_j)_{j=1,\dots,n}$ as the (limiting) \textit{value} of $f$ at the corner $\Cor$.  
    
    A 1-split corner is said to be \textit{admissible} if the limiting value of $f$ is nonzero at the corner and the corner aperture $\omega\neq \pi$. A 2-split corner is said to be \textit{admissible} if one of the following is true:
    \begin{enumerate}[(i)]
    		\item\label{cs:pitot} $\omega=\pi$ and $p_1\neq p_2$.
    		\item\label{cs:fn0} $\omega\neq\pi$ and, for $j=1$ or $2$, $p_j\neq 0$, and $\omega_{j}\neq \pi$ or $\omega_{j+1}= \pi$, where $\omega_3:=\omega_1$.
    \end{enumerate}
\end{dfn}
\begin{rem}
	Definition~\ref{dfn:splitcorner} allows the total corner aperture to be $\omega=\pi$, so that a ``corner'' may be geometrically flat. Moreover, adjacent limiting values need not be distinct, and therefore an $n$-split corner also includes corners with fewer than $n$ distinct limiting values.
	
	The admissibility conditions require the presence of at least one detectable singularity: either a genuine geometric corner with nonzero limiting value or a jump discontinuity in the limiting values across an interface.
	Accordingly, Case~\ref{cs:pitot} corresponds to a flat interface carrying a jump discontinuity, while Case~\ref{cs:fn0} includes genuine geometric corners together with mixed configurations in which one split is geometrically flat and the other remains singular. In particular, admissible $1$-split corners are contained in Case~\ref{cs:fn0}. See Figure~\ref{fig:admiss2split} for representative examples.
\end{rem}
\begin{figure}[h]
	\begin{subfigure}{0.24\textwidth}
		\centering\includegraphics[scale=.3]{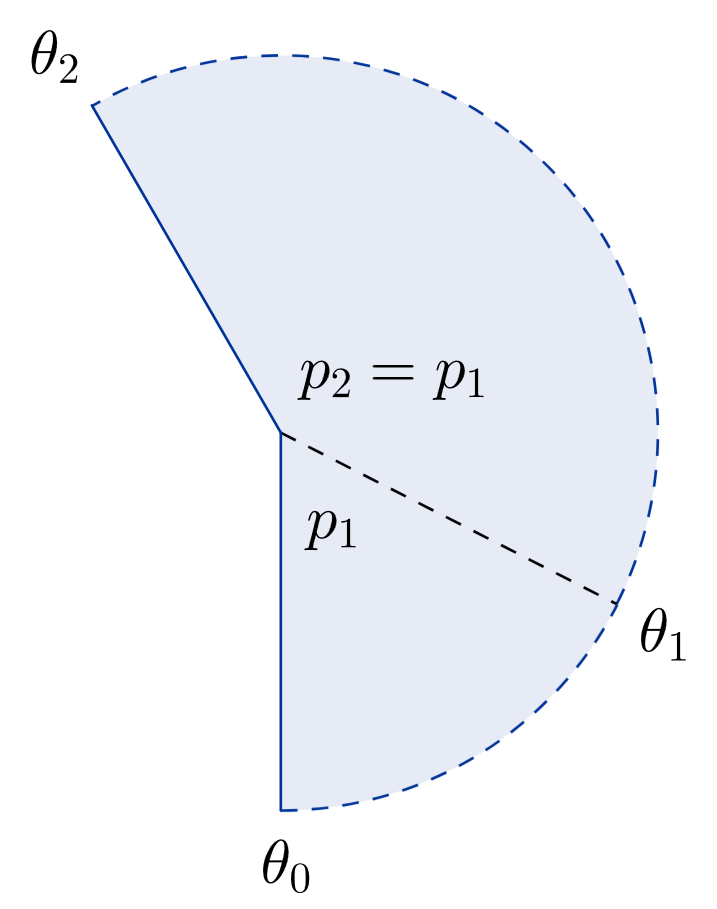}
		\caption{$\omega\neq \pi$, $p_1=p_2\neq 0$. }
	\end{subfigure}
	\begin{subfigure}{0.24\textwidth}
		\centering\includegraphics[scale=.3]{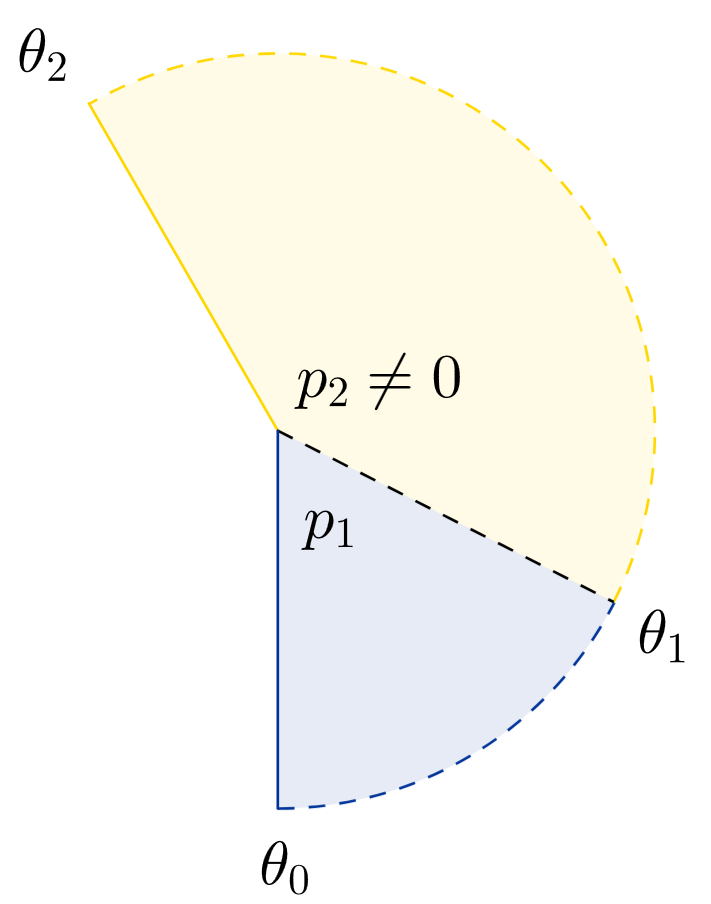}
		\caption{$\omega_2\neq \pi$, $p_2\neq 0$. }
		\label{fig:noflat}
	\end{subfigure}
	\begin{subfigure}{0.24\textwidth}
		\centering\includegraphics[scale=.3]{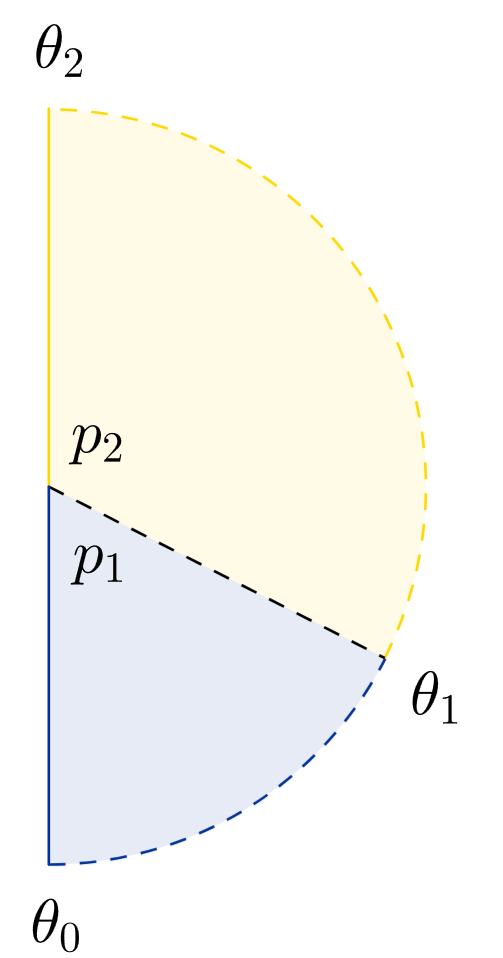} 
		\caption{$\omega=\pi$, $p_1\neq p_2$.}
		\label{fig:t0t2flat}
	\end{subfigure}
	\begin{subfigure}{0.24\textwidth}
		\centering\includegraphics[scale=.3]{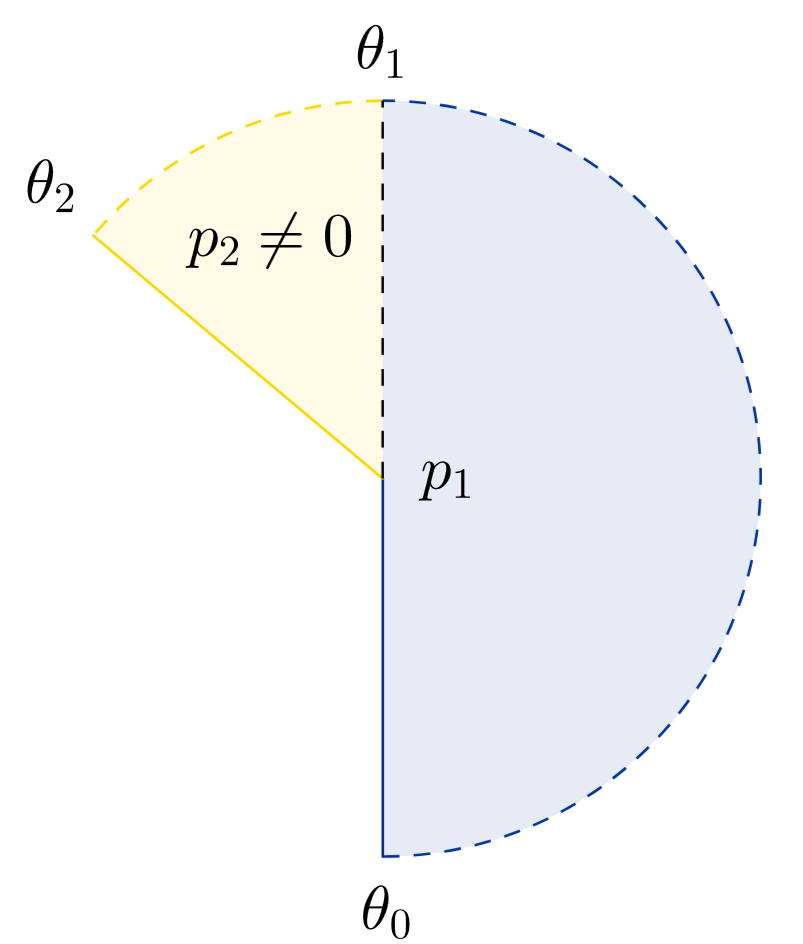}
		\caption{$\omega_1=\pi$, $p_2\neq 0$. }
		\label{fig:t0t1flat}
	\end{subfigure}
	\caption{Examples of the different cases for admissible 2-split corners.} 
\label{fig:admiss2split}
\end{figure}

Although Definition~\ref{dfn:splitcorner} allows arbitrary $n$-split corners, the two-split case already captures the interaction between geometric and coefficient singularities while keeping the resulting compatibility conditions explicit. We therefore focus on this fundamental case throughout the paper.
\subsection{Main Results}
We say that a source (always) radiates (nontrivially) if the corresponding far-field $u_\infty\not\equiv 0$. The result below shows that any source with an admissible 2-split corner always {radiates}:
\begin{thm}[Corners always radiate]\label{thm:splitscatter}
  Any source possessing an admissible (1- or) 2-split corner always radiates, even in a general bounded inhomogeneous background.
\end{thm}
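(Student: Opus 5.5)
We argue by contradiction. Suppose $(f,\Sigma)$ has an admissible 2-split corner $\Cor=\mathcal{K}_{\theta_0,\theta_2,R}$ with tip at the origin, but $u_\infty\equiv0$. Rellich's lemma and unique continuation show that $\usc=0$ in the unbounded component $G=\RR^2\backslash\Sigma$. Then $\usc\in H^1_{\rm loc}$ satisfies $\Delta\usc+k^2\rho\usc=f$ in $B_R(0)$ with $\usc=\partial_\nu\usc=0$ on $\partial\Cor\cap B_R$. Standard elliptic regularity in the wedge gives $\usc\in H^2(\Cor\cap B_{R'})$, hence Hölder continuity with $\usc(0)=0$ and $|\usc(x)|\lesssim|x|^{1+\beta}$ near the tip.

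We take any solution $u_0$ of $\Delta u_0=0$ in a neighborhood of $\overline{\Cor}$ and use Green's formula on $\Cor_h:=\Cor\cap B_h$. The Cauchy data vanish on the straight sides, so only the arc $\Gamma_h=\partial B_h\cap\Cor$ contributes, and we obtain the local integral identity
\begin{equation*}
\int_{\Cor_h} f\,u_0\,dx=\int_{\Cor_h}k^2\rho\,\usc\,u_0\,dx+\int_{\Gamma_h}\pare{\partial_\nu\usc\,u_0-\usc\,\partial_\nu u_0}\,ds .
\end{equation*}
For $u_0$ we take the CGO-type harmonic function $u_0=e^{-s\sqrt{z}}$ with $z=x_1+ix_2$, branch cut outside $\overline\Cor$ and $s\to\infty$ (as in \cite{Xiao_2022}). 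After rotating so that $\Cor$ is symmetric about the positive axis, $\Re\sqrt z\ge c|z|^{1/2}$ on $\overline\Cor$ because $\omega<2\pi$. This makes $u_0$ exponentially decaying away from the tip, and the arc terms are $O(e^{-c s\sqrt h})$. Inserting $f=p_j+O(|x|^{\alpha_j})$ on the two splits and using the decay of $\usc$, the right-hand side and the remainder terms are $O(s^{-4-2\min(\alpha_j,\beta)})$. The leading term is
\begin{equation*}
\sum_{j=1}^2 p_j\int_{\mathcal K_{\theta_{j-1},\theta_j,\infty}} e^{-s\sqrt z}\,dx = 6s^{-4}\sum_{j=1}^2 p_j\,I_j,
\end{equation*}
obtained by scaling and the explicit formula $\int_{\theta_{j-1}}^{\theta_j}\int_0^\infty e^{-s\sqrt r e^{i\theta/2}}r\,dr\,d\theta$. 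Here
\begin{equation*}
I_j=\int_{\theta_{j-1}}^{\theta_j}e^{-2i\theta}\,d\theta=\frac{i}{2}\pare{e^{-2i\theta_j}-e^{-2i\theta_{j-1}}}.
\end{equation*}
So vanishing radiation forces $p_1I_1+p_2I_2=0$. Since $u_0$ may be replaced by $e^{-s\sqrt{e^{i\phi}z}}$ for admissible rotations $\phi$, giving the same relation multiplied by a phase, a single relation is all we get. We therefore also use the complex conjugate, i.e.\ $e^{-s\sqrt{\bar z}}$, which gives $p_1\bar I_1+p_2\bar I_2=0$ because the $p_j$ are real.

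The key step is to show that these relations contradict admissibility. We have $I_j=0$ iff $\omega_j=\pi$. In case (i), $\omega=\pi$ gives $e^{-2i\theta_2}=e^{-2i\theta_0}$, so $I_2=-I_1$, and $I_1\neq0$ unless $\omega_1=\pi$, which is impossible since $\omega_2>0$. The relation then reads $(p_1-p_2)I_1=0$, so $p_1=p_2$, a contradiction. In case (ii), suppose $p_1\neq0$. If $\omega_2=\pi$ then $I_2=0$, and $\omega_1\neq\pi$ (otherwise $\omega=2\pi$), so $p_1I_1=0$ is a contradiction. If $\omega_1\neq\pi$ and $\omega_2\neq\pi$, the real and conjugate relations together say that $p_1I_1=-p_2I_2$ with $p_1,p_2$ real. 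This forces $I_1/I_2$ to be real. Writing $I_j=\sin(\omega_j)e^{-i(\theta_j+\theta_{j-1})}$ up to a common factor, the ratio is $\frac{\sin\omega_1}{\sin\omega_2}e^{i(\theta_2-\theta_0)}=\frac{\sin\omega_1}{\sin\omega_2}e^{i\omega}$, which is real only if $\omega\in\{0,\pi\}$. This is excluded in case (ii), so the contradiction follows.

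The main obstacle is the regularity and decay step near the tip. We need $\usc$ to vanish to order $|x|^{1+\beta}$ with $\beta>0$ uniformly in the wedge, using only $\rho\in L^\infty$ and $f\in L^\infty$ near the tip. We plan to obtain this by $H^2$ regularity after extending $\usc$ by zero across the sides, since the vanishing Cauchy data make the extension an $H^2_{\rm loc}$ solution with an $L^\infty$ right-hand side, and then applying Sobolev embedding into $C^{1,\beta}$ via $W^{2,p}$ estimates. We also have to check that the lower-order terms indeed decay faster than $s^{-4}$.
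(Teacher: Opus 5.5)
Your proof is correct, but its analytic step differs from the paper's.

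\textbf{The paper's route.} The paper obtains Theorem~\ref{thm:splitscatter} from Proposition~\ref{prp:source2}. That proof tests against the CGO solutions $w^{(l)}=e^{-\Phi_l/h}(1+w_h)$ of the full equation $\Delta w+k^2\rho w=0$ (Lemma~\ref{lem:CGO}, with $\beta<1/2$). Since $w$ already absorbs the background, identity \eqref{eq:fwint} has no volume term involving $u$. Hence no regularity or decay of $u$ at the tip is needed. The cost is the existence theory for these CGOs, plus the H\"older-type bound \eqref{eq:whterm} controlling the $w_h$ contribution.

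\textbf{Your route.} You use the explicit harmonic functions $e^{-s\sqrt z}$ and $e^{-s\sqrt{\bar z}}$. These are the paper's $\Phi_l$ with $\beta=1/2$ and no correction term. The background then appears as the volume term $k^2\int\rho\,\usc u_0$. This is legitimate. $\usc$ solves $\Delta\usc=f-k^2\rho\usc\in L^p$ in the whole ball $B_R$, so interior $W^{2,p}$ regularity applies and no corner singularity arises. Since $\usc$ and $\nabla\usc$ vanish on $B_R\setminus\overline{\Cor}$, the volume term is $O(s^{-6-2\gamma})=o(s^{-4})$. Mere H\"older continuity with $\usc(0)=0$ would already suffice.

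\textbf{What each buys.} Your approach is more elementary and closer to the classical harmonic-exponential corner arguments, at the price of a regularity step. The paper's approach needs no regularity input and carries over directly to the medium problem through \eqref{eq:wavescatterint}.

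\textbf{The algebra.} This part is equivalent to the paper's. Requiring $I_1/I_2=\frac{\sin\omega_1}{\sin\omega_2}e^{i\omega}$ to be real is the paper's condition $\det A=-8i\sin\omega\sin\omega_1\sin\omega_2\neq 0$, specialized to real $(p_1,p_2)$.

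\textbf{Two small remarks.}
First, for real $p_j$ the relation from $e^{-s\sqrt{\bar z}}$ is just the complex conjugate of the first one and adds nothing. It is the real-valuedness of the $p_j$ that turns one complex relation into two real equations. For complex-valued $f$ the conjugate relation becomes essential, and together they give the determinant $2i\sin\omega\sin\omega_1\sin\omega_2$.
Second, the leading constant is $12s^{-4}=2\Gamma(4)s^{-4}$, not $6s^{-4}$. This does not affect the argument.
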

\begin{rem}
	Theorem~\ref{thm:splitscatter} unifies two corner-radiation principles: non-flat corners with nonvanishing limiting values always radiate, and flat interfaces carrying jump discontinuities always radiate.
	Admissible split corners interpolate between these two situations.
\end{rem}
The following result is a special case of Theorem~\ref{thm:splitscatter}, which reveals that sources with an admissible 1-split corner always radiate, which was proved, for example, in \cite{Bla18}. 
\begin{cor}\label{cor:source1}
	  A source $f$ with a 1-split non-flat corner where  $f\neq 0$ at the corner tip always radiates.
\end{cor}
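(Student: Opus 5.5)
Corollary~\ref{cor:source1} will follow directly from Theorem~\ref{thm:splitscatter}, by checking that its hypotheses describe an admissible corner in the sense of Definition~\ref{dfn:splitcorner}. No new analysis should be needed.

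Let $f$ have a 1-split corner $\Cor=\mathcal{K}_{\theta_0,\theta_1,R}$ with aperture $\omega=\theta_1-\theta_0$ and limiting value $p_1$. Here $|f(x)-p_1|\le C_1|x|^{\alpha_1}$ a.e.\ in $\Cor$. The phrase ``$f\neq 0$ at the corner tip'' means precisely $p_1\neq 0$, and ``non-flat'' means $\omega\neq\pi$.

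The first route is to note that these are exactly the two conditions defining an admissible 1-split corner. Theorem~\ref{thm:splitscatter} explicitly covers admissible 1-split corners, so it yields $u_\infty\not\equiv0$ at once. This holds even in a bounded inhomogeneous background $\rho$, which is stronger than what the corollary asks.

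Alternatively, to stay within the 2-split admissibility statement, I will regard $\Cor$ as a 2-split corner. I will pick $\theta_0<\theta'<\theta_1$ and set $p_1=p_2$, with $C_2=C_1$ and $\alpha_2=\alpha_1$; the required estimates then hold trivially on each sub-wedge. I will check Case~\ref{cs:fn0}. We have $\omega\neq\pi$ and $p_1\neq0$. The remaining requirement is $\omega_1\neq\pi$ or $\omega_2=\pi$.

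It remains to choose $\theta'$ so that this last requirement holds. If $\omega<\pi$, every choice gives $\omega_1<\pi$. If $\omega>\pi$, I will choose $\theta'$ with $\omega_1\neq\pi$, for instance $\theta'=\theta_0+\omega/2$, which gives $\omega_1=\omega/2\neq\pi$ since $\omega<2\pi$. The corner is then admissible, and Theorem~\ref{thm:splitscatter} applies.

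The only point needing care is this choice of the fictitious split line, and it is elementary. The corollary is therefore an immediate special case, consistent with the classical result of \cite{Bla18}.
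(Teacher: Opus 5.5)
Your proposal is correct and matches the paper, which states the corollary as the special case of Theorem~\ref{thm:splitscatter} for admissible 1-split corners. Your second route, inserting a fictitious split line with $p_1=p_2$, spells out the paper's remark that such corners fall under Case~\ref{cs:fn0}; your case analysis on $\theta'$ is not needed, since any $\theta'$ works. Indeed, when $p_1=p_2\neq0$, the geometric requirement in Case~\ref{cs:fn0} fails for $j=1$ only if $\omega_1=\pi\neq\omega_2$, and then it holds for $j=2$.
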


The following local non-radiation criterion is the key technical result of this section. It immediately implies Theorem~\ref{thm:splitscatter} and serves as the foundation for the inverse uniqueness results proved below. We postpone its proof until Section~\ref{sec:proof}.
\begin{prp}\label{prp:source2}
	Suppose there exists $u\in H^1(\mathcal{K})$ satisfying
	\begin{align}\label{eq:sourcelocprob}
		\begin{cases}
			(\Delta+k^2\rho)u=f &\quad\text{in $\mathcal{K}$}, \\
			u=0 \AnD \partial_\nu u=0 &\quad\text{on $\partial\mathcal{K}\cap B_R$},
		\end{cases}
	\end{align}
with $\rho\in L^\infty(\Cor)$. 
	 Then $\mathcal{K}$ is not an admissible 2-split corner of $f$. 
\end{prp}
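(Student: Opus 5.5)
We argue by contradiction: assume $u\in H^1(\Cor)$ solves \eqref{eq:sourcelocprob} and that $\Cor=\mathcal{K}_{\theta_0,\theta_2,R}$ is an admissible 2-split corner of $f$ with limiting values $p_1,p_2$ on the sectors $\mathcal{K}_{\theta_0,\theta_1,R}$ and $\mathcal{K}_{\theta_1,\theta_2,R}$. The basic tool is a local integral identity. Take any $v\in H^1(\Cor)$ solving $\Delta v=0$ near the tip, and insert a cutoff $\chi\in C_c^\infty(B_R)$ with $\chi\equiv1$ on $B_{R/2}$. Green's formula and the vanishing Cauchy data of $u$ on $\partial\Cor\cap B_R$ give
\begin{equation*}
\int_{\Cor}\chi f v\,dx=\int_{\Cor}\chi(\Delta u+k^2\rho u)v\,dx=\int_{\Cor}k^2\chi\rho uv\,dx+\int_{\Cor}u\,(2\nabla\chi\cdot\nabla v+v\Delta\chi)\,dx .
\end{equation*}
The last term is supported in $R/2<|x|<R$. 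We will take $v$ to be the CGO/harmonic exponential $v=e^{s\,\zeta\cdot x}$ with $\zeta=d+id^\perp$, or, following \cite{Xiao_2022}, a CGO solution of $\Delta v+k^2\rho v=0$ with a decaying remainder. Here $d$ is chosen so that $\mathrm{Re}(\zeta\cdot x)\le-\delta|x|$ on $\overline{\Cor}$, which is possible whenever $\omega<\pi$. The boundary term is then $O(e^{-c s})$ as $s\to\infty$.

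Next we expand the left-hand side. Splitting $f=p_j+O(|x|^{\alpha_j})$ on each sector gives
\begin{equation*}
\sum_{j=1}^2 p_j\int_{\mathcal{K}_{\theta_{j-1},\theta_j,\infty}}e^{s\zeta\cdot x}\,dx+O(s^{-2-\alpha}),
\end{equation*}
and each sector integral evaluates explicitly as $s^{-2}$ times a closed form:
\begin{equation*}
I_j=\frac{1}{s^{2}}\cdot\frac{i\,(e^{-2i\theta_{j}}-e^{-2i\theta_{j-1}})}{2\,(\ldots)}.
\end{equation*}
More cleanly, writing $\zeta\cdot x=-re^{i(\theta-\phi)}$ up to normalization, we have $\int_{\theta_{j-1}}^{\theta_j}e^{-2i(\theta-\phi)}\,d\theta$ times $s^{-2}$. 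For the right-hand side we need a priori regularity of $u$ at the tip. Since $u$ has zero Cauchy data on both edges and solves an elliptic equation with bounded right-hand side, a local estimate (e.g.\ via $H^2$ regularity in the sector, or directly via the Cauchy–Schwarz bound $\|e^{s\zeta\cdot x}\|_{L^2(\Cor)}=O(s^{-1})$ together with $u=O(|x|^{1+\epsilon})$ near $0$ from vanishing Cauchy data) shows that $\int k^2\rho uv=o(s^{-2})$. Multiplying by $s^2$ and letting $s\to\infty$ yields the algebraic identity
\begin{equation*}
p_1\big(e^{-2i\theta_1}-e^{-2i\theta_0}\big)+p_2\big(e^{-2i\theta_2}-e^{-2i\theta_1}\big)=0,
\end{equation*}
valid for every admissible direction $\phi$. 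This gives a family of conditions; after rotating $\phi$ the identity holds up to a common nonzero phase factor, so it reduces to the single identity displayed.

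From here we do the case analysis using the admissibility conditions. In case \ref{cs:fn0} with $\omega<\pi$, suppose for instance $p_2\neq0$ and $\omega_2\ne\pi$. If $p_1=0$, the identity forces $e^{-2i\theta_2}=e^{-2i\theta_1}$, i.e.\ $\omega_2\in\{0,\pi\}$, which is a contradiction. If $p_1\neq0$, the two vectors $e^{-2i\theta_1}-e^{-2i\theta_0}$ and $e^{-2i\theta_2}-e^{-2i\theta_1}$ are chords of the unit circle, and with real coefficients they can be linearly dependent only if they are parallel. That is excluded because the three points $e^{-2i\theta_j}$ are distinct and not collinear, since all of them lie on the circle. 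The subcase $\omega_{j+1}=\pi$ is handled the same way. For $\omega>\pi$ the exponential no longer decays on the whole corner. Here we instead take $v=e^{s\zeta\cdot x}$ with complex $\zeta$ rotated differently, or we use the CGO of \cite{Xiao_2022} built from $r^{\cdot}e^{i\cdot\theta}$-type harmonic functions, for instance $v=\exp(-s\,z^{1/2}\text{-type})$ adapted to the aperture. The same reduction then gives an identity of the form $\sum p_j(e^{-i\mu\theta_j}-e^{-i\mu\theta_{j-1}})=0$, and we argue in the same way.

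Case \ref{cs:pitot} ($\omega=\pi$) is the main obstacle. Here no exponential decays uniformly on the closed half-plane, and the leading term $p_1+p_2$ contributions combine with a different phase. I would use the CGO solutions of \cite{Xiao_2022} with a power-type phase $v=\exp(-s\,(e^{-i\phi}z)^{\gamma})$ for some $\gamma<1$, chosen so that it decays in the sector of aperture $\pi$. This produces the identity
\begin{equation*}
p_1\big(e^{-2i\gamma'\theta_1}-e^{-2i\gamma'\theta_0}\big)+p_2\big(e^{-2i\gamma'\theta_2}-e^{-2i\gamma'\theta_1}\big)=0
\end{equation*}
with a non-integer exponent. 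Varying the exponent and $\phi$ then forces $p_1=p_2$, contradicting the admissibility assumption $p_1\ne p_2$. Controlling the remainder terms, in particular $u$'s vanishing rate at the tip against the slower decay of the power-type CGO, is where I expect the technical difficulty to lie.
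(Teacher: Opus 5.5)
There is a genuine gap in your treatment of apertures $\omega\ge\pi$, which covers all of Case~\ref{cs:pitot} and reflex corners in Case~\ref{cs:fn0}. You are right that no rotation of a linear phase decays on such a sector, so a power-type phase is forced. This is what the paper does for every aperture: Lemma~\ref{lem:CGO} with $\Phi_l=r^\beta e^{\pm i\beta\theta}$, $\beta<1/2$, satisfies $\Re\Phi_l\ge\delta|x|^\beta$ on any closed sector of aperture $<2\pi$.

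Your prediction of the identity such a phase yields is wrong. For $v=\exp(-s(e^{-i\phi}z)^\gamma)$,
\[
\int_0^\infty r\,e^{-s r^\gamma e^{i\gamma(\theta-\phi)}}\,dr=\frac{\Gamma(2/\gamma)}{\gamma}\,s^{-2/\gamma}e^{-2i(\theta-\phi)},
\]
so the angular weight is $e^{-2i\theta}$ for every $\gamma$, and $\phi$ only contributes the common factor $e^{2i\phi}$. No identity with $e^{-2i\gamma'\theta_j}$ or $e^{-i\mu\theta_j}$ arises. Varying $\gamma$ and $\phi$ just reproduces the single relation
\[
p_1(e^{-2i\theta_1}-e^{-2i\theta_0})+p_2(e^{-2i\theta_2}-e^{-2i\theta_1})=0 .
\]
So the mechanism you propose for forcing $p_1=p_2$ does not exist as written.

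What you are missing is that this one relation already suffices. If $\omega=\pi$, then $e^{-2i\theta_2}=e^{-2i\theta_0}$, and the relation collapses to $(p_1-p_2)(e^{-2i\theta_1}-e^{-2i\theta_0})=0$ with $\omega_1\in(0,\pi)$, hence $p_1=p_2$. Case~\ref{cs:pitot} is therefore the easiest case, not the main obstacle. The admissibility conditions are phrased through $\omega_j=\pi$ precisely because they encode the coincidences $e^{-2i\theta_j}=e^{-2i\theta_{j-1}}$ of this relation. You should use the fractional phase uniformly for all $\omega\in(0,2\pi)$ and drop the linear exponentials.

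Apart from this, your route is valid and differs from the paper in one respect. You test with harmonic functions, so you must show that $k^2\int\chi\rho uv$ is negligible. The right justification is to extend $u$ by zero across $\partial\Cor\cap B_R$, which is legitimate because both Cauchy data vanish. Then $\tilde u\in W^{2,p}$ near $0$ for all $p<\infty$ and vanishes on an open set touching $0$, so $|u(x)|\le C|x|^{1+\gamma'}$. Boundary $H^2$ regularity in the sector alone fails for reflex angles, so do not rely on it. With this bound, $\int|u||v|\lesssim s^{-(3+\gamma')/\gamma}=o(s^{-2/\gamma})$, and the remainder difficulty you anticipate does not arise.

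The paper instead uses CGO solutions of the full equation $\Delta w+k^2\rho w=0$. Then \eqref{eq:fwint} has no interior $u$-term and only $u\in H^1$ is needed, at the cost of controlling the CGO remainder $w_h$ as in \eqref{eq:whterm}. Your chord argument for the non-parallel vectors $e^{-2i\theta_1}-e^{-2i\theta_0}$ and $e^{-2i\theta_2}-e^{-2i\theta_1}$ is equivalent to the paper's computation $\det A=-8i\sin\omega\sin\omega_1\sin\omega_2\neq0$.
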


Proposition~\ref{prp:source2} can be applied to inverse problems of identifying the shape and location $\Sigma$ of a source $f$ using a single measurement. 
In the rest, we assume for any source $(f,\Sigma)$ that $\overline{\Sigma^\circ}=\Sigma$. 
Our first inverse result concerns two sources producing identical far-field patterns. It shows that any exposed corner connected to infinity cannot be an admissible split corner. The result holds in a general known inhomogeneous background satisfying $\rho\in L^\infty(\mathbb R^2)$ with compactly supported perturbation $\rho-1$.
\begin{prp}\label{prp:invSourceGeneral}
	Suppose that $(f^{(j)},\Sigma_j)$, $j=1,2$, are two sources producing the same far-field. Then for each $j=1,2$, $(f^{(j)},\Sigma_j)$ does not have any admissible 2-split corner on $\Gamma_j$, where $\Gamma_j$ is any connected and relatively-open subset of $\partial\Sigma_j\backslash\Sigma_{j+1}$ that connects to infinity in $\RR^2\backslash(\Sigma_1\cup \Sigma_2)$, with $\Sigma_3:=\Sigma_1$.
\end{prp}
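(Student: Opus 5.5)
The plan is to reduce the statement to the local non-radiation criterion, Proposition~\ref{prp:source2}. Let $u^{(j)}$ be the scattered field of $f^{(j)}$ solving \eqref{eq:sourcescattering} in the common known background $\rho$, and set $w:=u^{(1)}-u^{(2)}$. The far fields coincide, so $w$ has vanishing far field. Outside a large ball containing $\Sigma_1\cup\Sigma_2\cup\Supp(\rho-1)$, the function $w$ solves the homogeneous Helmholtz equation. Rellich's lemma then gives $w\equiv 0$ there.

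Next we propagate this vanishing inward. Let $G$ be the unbounded connected component of $\RR^2\backslash(\Sigma_1\cup\Sigma_2)$. In $G$ we have $(\Delta+k^2\rho)w=0$ with $\rho\in L^\infty$. By the unique continuation principle for $H^2_{\text{loc}}$ solutions with bounded potential (valid in two dimensions), $w\equiv0$ in all of $G$.

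Now fix $j=1$ (the case $j=2$ is symmetric). Suppose, for contradiction, that $(f^{(1)},\Sigma_1)$ has an admissible 2-split corner $\Cor=\mathcal K_{\theta_0,\theta_2,R}$ whose tip lies on $\Gamma_1$.
\begin{itemize}
\item Since $\Gamma_1\subset\partial\Sigma_1\backslash\Sigma_2$ is relatively open and $\Sigma_2$ is closed, after shrinking $R$ we get $B_R\cap\Sigma_2=\emptyset$.
\item Because $\Gamma_1$ connects to infinity in the complement, and $B_R\cap\Sigma_1=\overline{\Cor}$, the exterior part $B_R\backslash\overline{\Cor}$ lies in $G$. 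Here we must check that this open set is connected, which holds since $\omega<2\pi$.
\item Hence $w=0$ in $B_R\backslash\overline{\Cor}$.
\item In $B_R$ the field $u^{(2)}$ satisfies $(\Delta+k^2\rho)u^{(2)}=0$, so it lies in $H^2(B_R)$. Also $u^{(1)}\in H^2_{\text{loc}}$ since $f^{(1)}\in L^2$. Thus $w\in H^2(B_R)$, and the traces of $w$ and $\partial_\nu w$ from outside are zero.
\item By continuity of the $H^2$ traces across $\partial\Cor\cap B_R$, $w|_{\Cor}$ satisfies $(\Delta+k^2\rho)w=f^{(1)}$ in $\Cor$, together with $w=\partial_\nu w=0$ on $\partial\Cor\cap B_R$.
\end{itemize}
Proposition~\ref{prp:source2} then says $\Cor$ is not an admissible 2-split corner of $f^{(1)}$, which is a contradiction. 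The 1-split case is contained in this, since by the Remark an admissible 1-split corner is an admissible 2-split corner with $p_1=p_2$.

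The main obstacle is the topological step: showing that the exterior of the corner near its tip lies in the component $G$ where $w$ vanishes. This has to be extracted carefully from the hypothesis that $\Gamma_1$ is connected, relatively open and connects to infinity, together with $\overline{\Sigma^\circ}=\Sigma$. I would first prove that each point of $\Gamma_1$ is a boundary point of $G$ with a full punctured exterior neighbourhood in $G$, using the corner structure of $\Sigma_1^\circ$ near the tip. The remaining steps, Rellich's lemma, unique continuation and trace matching, are standard.
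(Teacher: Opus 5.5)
Your proposal is correct and follows essentially the same route as the paper. Rellich's lemma and unique continuation make $u^{(1)}-u^{(2)}$ vanish in the unbounded component of $\RR^2\backslash(\Sigma_1\cup\Sigma_2)$, so its Cauchy data vanish on $\Gamma_1$; since $f^{(2)}=0$ near $\Gamma_1$, Proposition~\ref{prp:source2} applied to $u^{(1)}-u^{(2)}$ with source $f^{(1)}$ gives the contradiction. The paper works on a connected component $D$ of $\Sigma_1^\circ\backslash\Sigma_2$ adjacent to $\Gamma_1$ rather than on a small ball, and your $H^2$ trace argument fills in detail it leaves implicit. The topological step you flagged is immediate: $B_R\backslash\overline{\mathcal{K}}$ is connected and meets the unbounded component because the corner tip lies on its boundary.
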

\begin{figure}[h]
\begin{subfigure}{0.5\textwidth}
\centering\includegraphics[width=0.6\linewidth]{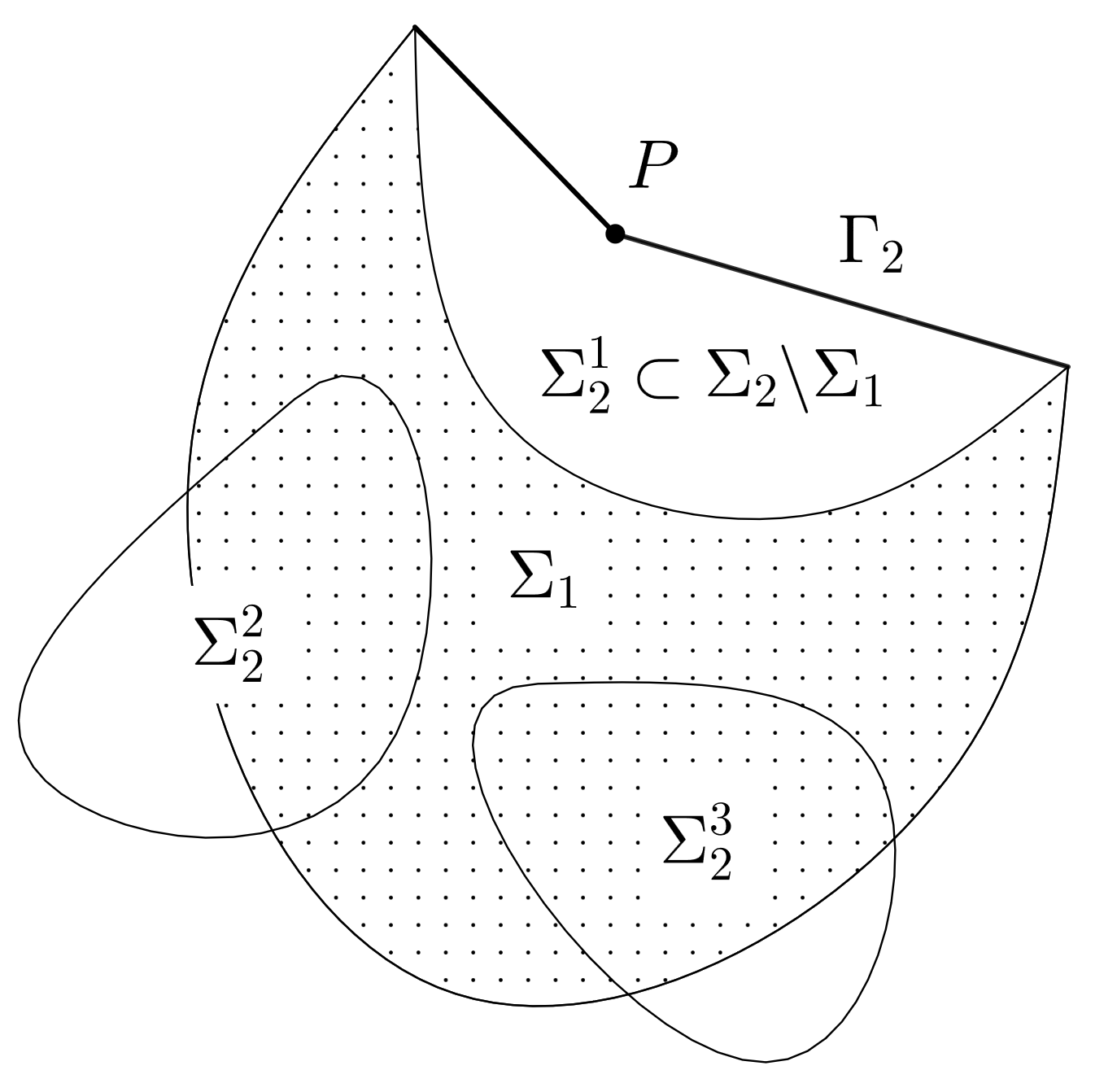} 
\caption{$\Sigma_2=\bigcup_{j=1}^3\Sigma_2^j$ union of disjoint sets.}
\label{fig:disjunion}
\end{subfigure}
\begin{subfigure}{0.5\textwidth}
\centering\includegraphics[width=0.6\linewidth]{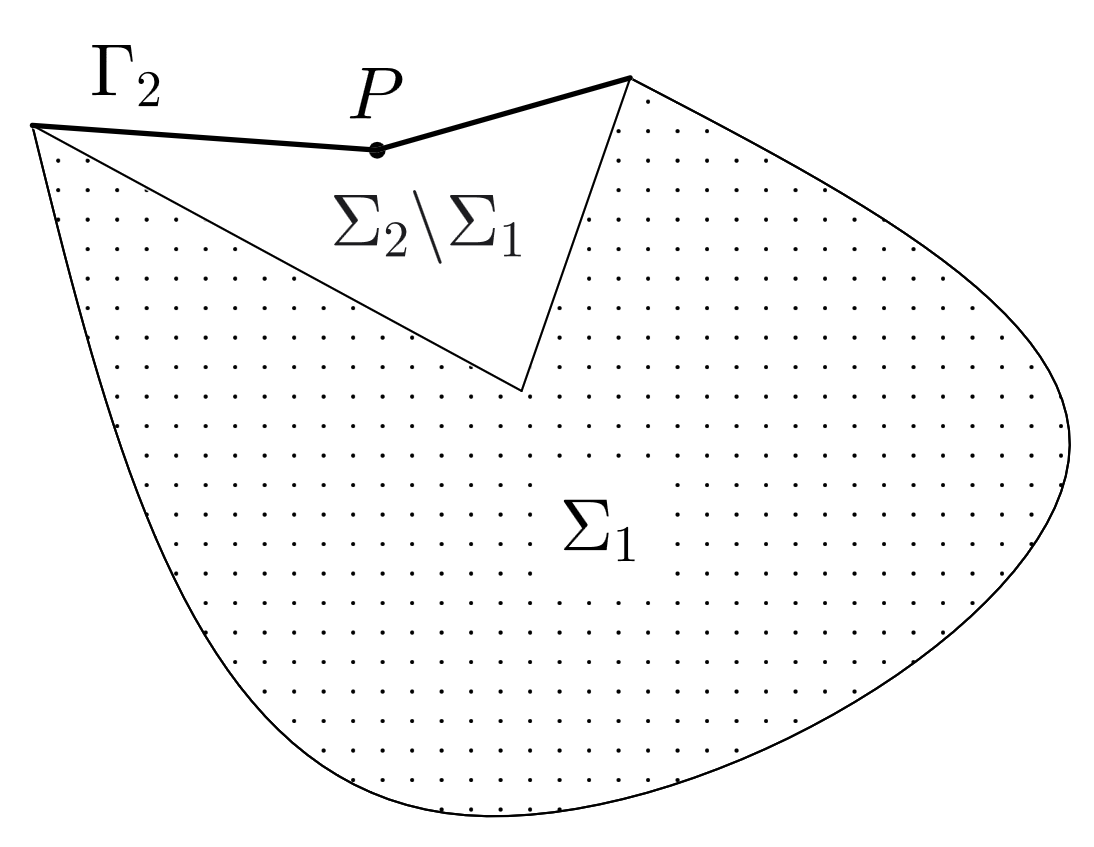}
\caption{$\Sigma_2$ simply connected, conv$(\Sigma_1)=$conv $(\Sigma_2)$ }
\label{fig:simpconnect}
\end{subfigure}
\caption{In each image, if $(f_2, \Sigma_2)$ has a 1- or 2-split corner at the point $P$, Proposition \ref{prp:invSourceGeneral} guarantees that $u_{\infty,1}\neq u_{\infty,2}$.}
\label{fig:equalffex}
\end{figure}

We next consider polygonal sources, namely sources $(f,\Sigma)$ for which $\Sigma$ is a union of finitely many disjoint (possibly concave) polygons.
\begin{thm}\label{thm:inversescattering}
	Let $(f,\Sigma)$ be a polygonal source such that each corner of $\Sigma$ is admissible (1- or) 2-split for $f$. Then $\Conv (\Sigma)$ can be uniquely recovered from the corresponding far-field for a fixed wavenumber $k$.
\end{thm}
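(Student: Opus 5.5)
We argue by contradiction using Proposition~\ref{prp:invSourceGeneral}. Suppose $(f^{(1)},\Sigma_1)$ and $(f^{(2)},\Sigma_2)$ are two polygonal sources, each of whose corners is an admissible 1- or 2-split corner for the corresponding source, and that they produce the same far-field at the fixed wavenumber $k$. We aim to show $\Conv(\Sigma_1)=\Conv(\Sigma_2)$. Assume not. Since both hulls are convex polygons (the convex hull of finitely many polygons is the convex hull of their finitely many vertices), and they differ, one of them has a vertex outside the other. Without loss of generality there is a vertex $P$ of $\Conv(\Sigma_1)$ with $P\notin\Conv(\Sigma_2)$.

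\textbf{Step 1: $P$ is a corner of $\Sigma_1$.} Every extreme point of $\Conv(\Sigma_1)$ belongs to $\Sigma_1$. Moreover, $P$ is a vertex of one of the constituent polygons, since a point in the relative interior of an edge is not extreme. Near $P$, the set $\Sigma_1$ therefore coincides with a sector of aperture $\omega<\pi$; indeed $\omega$ is strictly less than $\pi$ because $P$ is a vertex of the convex hull. After a rigid motion this sector is $\mathcal{K}_{\theta_0,\theta_1,R}$ in the sense of Definition~\ref{dfn:splitcorner}. By hypothesis it is an admissible 1- or 2-split corner of $f^{(1)}$.

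\textbf{Step 2: the corner lies on an admissible boundary piece.} Because $\Conv(\Sigma_2)$ is closed and $P\notin\Conv(\Sigma_2)$, there is $\epsilon>0$ with $B_\epsilon(P)\cap\Sigma_2=\emptyset$. Take $\Gamma_1:=\partial\Sigma_1\cap B_{\epsilon'}(P)$ for small $\epsilon'\le\min(\epsilon,R)$. This is the union of two segments meeting at $P$, so it is connected and relatively open in $\partial\Sigma_1$, and it lies in $\partial\Sigma_1\backslash\Sigma_2$. It remains to check that $\Gamma_1$ connects to infinity in $\RR^2\backslash(\Sigma_1\cup\Sigma_2)$. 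Choose a supporting line $L$ of $\Conv(\Sigma_1)$ through $P$ that meets $\Conv(\Sigma_1)$ only at $P$; this is possible since $P$ is a vertex. Then take the outward direction $d$ normal to $L$, perturbed slightly if needed so that it also strictly separates $P$ from $\Conv(\Sigma_2)$. Concretely, pick $d$ in the open cone of outward normals of $\Conv(\Sigma_1)$ at $P$ with $d\cdot P>\max_{\Conv(\Sigma_2)} d\cdot x$. Such a $d$ exists because $P$ lies outside $\Conv(\Sigma_2)$ and the normal cone at a vertex has nonempty interior. With this choice, the ray $P+td$, $t>0$, avoids $\Sigma_1$, and it avoids $\Sigma_2$ since $d\cdot(P+td)>d\cdot P$. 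Rays from nearby points of $\Gamma_1$ in the same direction also avoid both sets. Hence $\Gamma_1$ lies on the boundary of the unbounded component of $\RR^2\backslash(\Sigma_1\cup\Sigma_2)$.

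\textbf{Step 3: conclusion.} Proposition~\ref{prp:invSourceGeneral} now says that $(f^{(1)},\Sigma_1)$ has no admissible 2-split corner on $\Gamma_1$. This contradicts Step 1; a 1-split admissible corner is also a 2-split admissible corner with $p_1=p_2$, by Case~\ref{cs:fn0}. Therefore $\Conv(\Sigma_1)=\Conv(\Sigma_2)$.

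The main obstacle is Step 2. The difficulty is verifying rigorously, in the sense used in Proposition~\ref{prp:invSourceGeneral}, that $\Gamma_1$ connects to infinity. In particular, a single direction $d$ must simultaneously exit $\Conv(\Sigma_1)$ at $P$ and separate $P$ from $\Conv(\Sigma_2)$. To handle this I would first try a strict separating hyperplane between the point $P$ and the compact convex set $\Conv(\Sigma_2)$, then intersect its open half-space of admissible normals with the interior of the normal cone at $P$. If that intersection were empty, I would instead connect $\Gamma_1$ to infinity by a polygonal path: first move along a normal of $\Conv(\Sigma_1)$ inside $B_\epsilon(P)$, then follow the separating normal.
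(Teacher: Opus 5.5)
Your overall strategy matches the paper's. The paper applies Proposition~\ref{prp:source2} directly to $u_1-u_2$, whose source $f^{(1)}-f^{(2)}$ equals $f^{(1)}$ near a vertex $P$ of $\Conv(\Sigma_1)$ outside $\Conv(\Sigma_2)$; routing this through Proposition~\ref{prp:invSourceGeneral} amounts to the same thing. Steps~1 and~3 are fine. The paper just asserts that $P$ connects to infinity in $\RR^2\setminus(\Sigma_1\cup\Sigma_2)$, and your justification of this in Step~2 is where the argument breaks.

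You claim some $d$ in the interior of the normal cone of $\Conv(\Sigma_1)$ at $P$ also strictly separates $P$ from $\Conv(\Sigma_2)$. That does not follow from the two facts you cite, and it is false in general. Take $P=0$ and $\Sigma_1=\Conv\{0,(-10,1),(-10,-1)\}$, whose normal cone at $0$ is $\{|d_2|\le 10d_1\}$. Take $\Sigma_2=\Conv\{(1,100),(1,-100),(100,0)\}$, whose strictly separating directions form $\{d_1<-100|d_2|\}$. These two cones are disjoint. Your fallback fails on the same example: with $n=(1,0)$, every ray from $P+sn$, $0<s<1$, in a separating direction crosses the edge $\{-10\}\times[-1,1]$ of $\Sigma_1$. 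Also, rays issued from points of $\Gamma_1\subset\Sigma_1$ can never avoid $\Sigma_1$; what you actually need is that the exterior adjacent to $\Gamma_1$ reaches infinity.

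The fix is to take $d$ \emph{outside the tangent cone} of $\Conv(\Sigma_1)$ at $P$, not inside its normal cone.
\begin{itemize}
\item The directions $e\in\Ss^1$ for which the ray $P+te$, $t>0$, meets $\Conv(\Sigma_1)$ form a closed arc whose length is the hull's interior angle at $P$, so it is $<\pi$.
\item The directions for which this ray meets the compact convex set $\Conv(\Sigma_2)\not\ni P$ form a closed arc inside an open half-circle, so it also has length $<\pi$.
\item Two such arcs cannot cover $\Ss^1$, so some ray from $P$ misses $\Sigma_1\cup\Sigma_2$ entirely. In the example, $d=(0,1)$ works.
\end{itemize}
Now $B_{\epsilon'}(P)\setminus\Sigma_1$ is connected, being a disc minus a closed sector of aperture $<\pi$ with vertex at the center. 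It is disjoint from $\Sigma_2$ and contains $P+td$ for small $t>0$. Hence it lies in the unbounded component of $\RR^2\setminus(\Sigma_1\cup\Sigma_2)$, so $\Gamma_1$ connects to infinity and your Step~3 goes through.
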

\begin{rem}\label{rem:outerpolyg}
	Although Theorem~\ref{thm:inversescattering} is stated for polygonal supports, the proof only uses the corner geometry. Hence the same conclusion holds whenever $\Sigma$ is contained in a polygonal region $\Sigma'$ sharing the same corners, so that $\operatorname{conv}(\Sigma)=\operatorname{conv}(\Sigma')$. See Figure~\ref{fig:innerregouterpolyg}.
\end{rem}
\begin{figure}[h!]
	\centering \includegraphics[scale=.35]{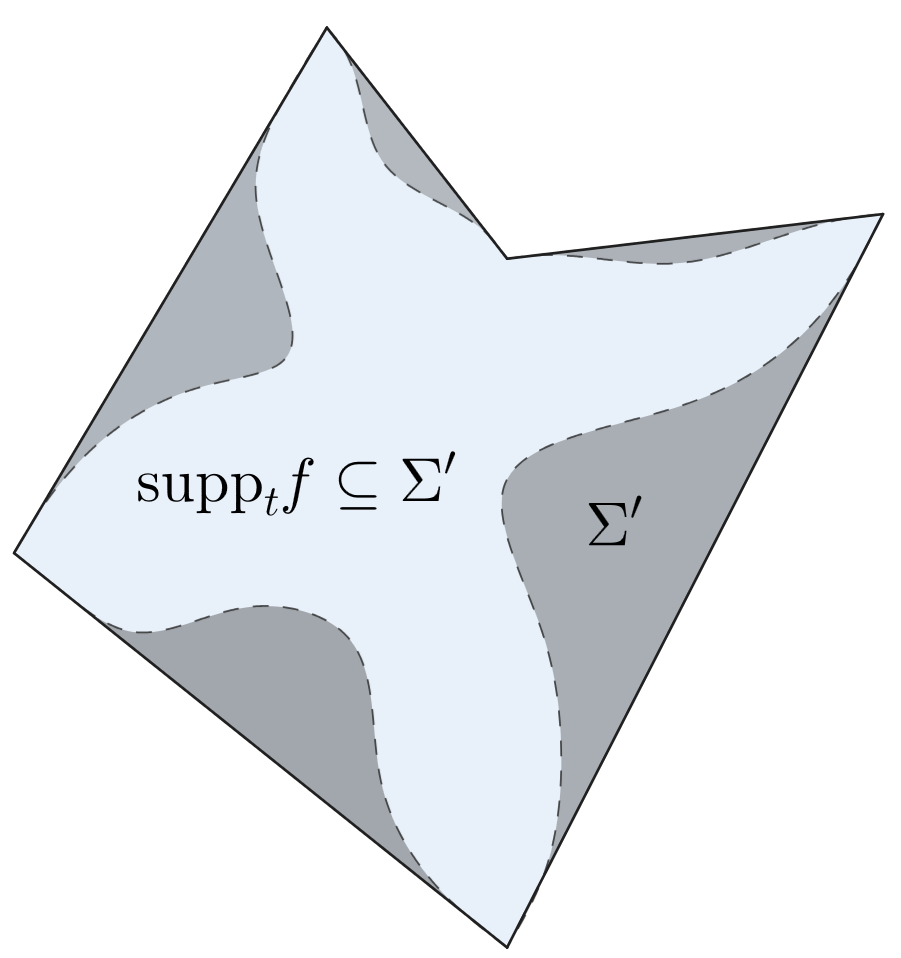}
	\caption{An example of a region $\tS{f}$ contained in a polygonal region $\Sigma'$ sharing its corners, as described in Remark \ref{rem:outerpolyg}.}\label{fig:innerregouterpolyg}
\end{figure}

For polygonal sources, the previous uniqueness result can be strengthened by deriving compatibility conditions for exposed vertices of two sources producing identical far-field patterns.
In particular, all ``exposed vertices'' must satisfy strong geometric compatibility conditions.
\begin{prp}\label{prp:inversescattering1}
	For each $j=1,2$, let $(f^{(j)},\Sigma_j)$ be a polygonal source such that all {boundary points} of $\Sigma_j$ are admissible 1-split for $f^{(j)}$. If $(f^{(1)},\Sigma_1)$ and $(f^{(2)},\Sigma_2)$ produce the same far-field, then any vertex $x'$ of $\Sigma_1$ (resp., $\Sigma_2$) lying on $\partial\Sigma_1\cup\partial\Sigma_2$ that connect to infinity in the exterior of $\Sigma_1\cup\Sigma_2$ must also be a vertex of $\Sigma_2$ (resp., $\Sigma_1$).
	Moreover, the bisector angle $\beta_j$ and aperture $\omega_j$ of the corner of $\Sigma_j$, $j=1,2$, satisfy 
	\begin{equation*}
		\beta_1=\beta_2+m\pi/2\AnD p_1\sin \alpha_1=(-1)^{m}p_2\sin \alpha_2,\qquad m\in\mathbb{Z},
	\end{equation*}
	where $p_j$ is the limiting value of $f^{(j)}$ in the corner of $\Sigma_1$ at $x'$ as in Definition~\ref{dfn:splitcorner}, $j=1,2$. 
\end{prp}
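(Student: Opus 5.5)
The idea is to reduce to a local problem at the vertex $x'$ and then combine Proposition~\ref{prp:invSourceGeneral} with the integral identity behind Proposition~\ref{prp:source2}.

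\textbf{Step 1: $x'$ is a vertex of both supports.} Suppose $x'$ is a vertex of $\Sigma_1$ lying on $\partial\Sigma_1\cup\partial\Sigma_2$ and connected to infinity in the exterior of $\Sigma_1\cup\Sigma_2$.
\begin{itemize}
\item If $x'\notin\Sigma_2$, a small neighbourhood of $x'$ in $\partial\Sigma_1$ lies in $\partial\Sigma_1\setminus\Sigma_2$ and is connected to infinity. Since every boundary point is admissible 1-split, the corner at $x'$ is an admissible 1-split corner. This contradicts Proposition~\ref{prp:invSourceGeneral}.
\item Hence $x'\in\partial\Sigma_2$ (it cannot be interior, being connected to infinity). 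If $x'$ were not a vertex of $\Sigma_2$, it would lie on a straight edge of $\Sigma_2$.
\end{itemize}
In the second case, set $w=u^{(1)}_{\rm sc}-u^{(2)}_{\rm sc}$. By Rellich's lemma and unique continuation, $w$ vanishes in the unbounded component of the exterior of $\Sigma_1\cup\Sigma_2$. Near $x'$ it satisfies $(\Delta+k^2\rho)w=f^{(1)}-f^{(2)}$ with zero Cauchy data on the outer boundary of a local wedge. That wedge is $\Cor=\Sigma_1\cup\Sigma_2$ near $x'$ (or the relevant region attached to infinity), and $f^{(1)}-f^{(2)}$ is split on it with values given by the combinations $p_1\chi_{\Sigma_1}-p_2\chi_{\Sigma_2}$. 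Checking the cases shows this yields an admissible 2-split configuration:
\begin{itemize}
\item if the corner of $\Sigma_1$ lies inside the half-plane of $\Sigma_2$, we get $\omega=\pi$ with a jump, namely $p_1-p_2$ versus $-p_2$, which differ since $p_1\ne0$;
\item otherwise we get a non-flat corner with a nonzero value on a non-flat split.
\end{itemize}
Proposition~\ref{prp:source2} then gives a contradiction. So $x'$ is a vertex of both supports. The same local argument, run with the difference source on the union wedge, applies symmetrically.

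\textbf{Step 2: the compatibility relation.} With $x'=0$ a common vertex, $w$ has zero Cauchy data on $\partial\Cor$, where $\Cor$ is the union wedge. I would test against the CGO solutions of \cite{Xiao_2022}, $u_0=e^{\tau(\cos\phi+i\sin\phi)\cdot x}$-type harmonic exponentials $e^{-\tau z}$ with $z=x_1+ix_2$ after rotation, corrected for $\rho$. Green's formula gives
$$\int_\Cor (f^{(1)}-f^{(2)})u_0\,dx=\text{(lower order)}.$$
The leading term is
$$\int_{\Cor_1} p_1e^{-\tau z}-\int_{\Cor_2} p_2e^{-\tau z}\sim\tau^{-2}\Big(p_1\,\tfrac{\sin\omega_1}{\cdots}e^{-2i\beta_1}\cdots\Big),$$
using the explicit formula $\int_{\mathcal K_{\theta_0,\theta_1,\infty}}e^{-\tau z}dx=\frac{i}{2\tau^2}(e^{-2i\theta_1}-e^{-2i\theta_0})=\tau^{-2}\sin\omega\,e^{-2i\beta}\cdot$const. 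The error terms are $O(\tau^{-2-\alpha})$ from the Hölder remainders, $O(e^{-c\tau})$ from the cutoff, and small contributions from the $\rho$ correction. Multiplying by $\tau^2$ and letting $\tau\to\infty$ gives
$$p_1\sin\omega_1 e^{-2i\beta_1}=p_2\sin\omega_2e^{-2i\beta_2}$$
(here $\alpha_j$ in the statement is the aperture-related angle). The quantities involved are real, and the identity must hold for all admissible rotations of the exponential direction: replacing $e^{-\tau z}$ by $e^{-\tau e^{i\psi}z}$ for $\psi$ in an open set multiplies both sides by $e^{-2i\psi}$. Taking real and imaginary parts then forces $e^{2i(\beta_1-\beta_2)}$ to be real, that is $\beta_1=\beta_2+m\pi/2$, together with $p_1\sin\alpha_1=(-1)^mp_2\sin\alpha_2$.

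\textbf{Main obstacle.} I expect the hardest part to be the asymptotic analysis of the lower-order terms: the variable-background CGO remainder and the $H^1$ boundary terms must be shown to be $o(\tau^{-2})$ uniformly in the admissible range of $\psi$. I would take these estimates from the proof of Proposition~\ref{prp:source2} in Section~\ref{sec:proof}.
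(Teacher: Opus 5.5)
Your Step~2 identity is the right one, and set up correctly it is the paper's whole proof. However, two steps fail as written.

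\textbf{First, the second half of Step~1 is false.} The first bullet, via Proposition~\ref{prp:invSourceGeneral}, is fine. But when $x'$ lies on a straight edge of $\Sigma_2$, the difference $f^{(1)}-f^{(2)}$ on the union wedge is in general \emph{three}-split, and Proposition~\ref{prp:source2} says nothing about such corners. For example, if $\Sigma_2\cap B_\varepsilon=\Cor_{-\pi,0,\varepsilon}$ and $\Sigma_1\cap B_\varepsilon=\Cor_{-3\pi/4,-\pi/4,\varepsilon}$, the limiting values on the three sectors are $-p_2,\ p_1-p_2,\ -p_2$. If instead $\Sigma_1\cap B_\varepsilon=\Cor_{a,b,\varepsilon}$ with $0<a<b<\pi$, and $x'$ reaches infinity through $\{b<\theta<\pi\}$, the relevant wedge is $\Cor_{-\pi,b,\varepsilon}$ with values $-p_2,\ 0,\ p_1$. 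No ``checking the cases'' turns these into admissible 2-split corners.

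\textbf{Second, your test functions are not adequate.} The linear exponentials $e^{-\tau e^{i\psi}z}$ decay only on wedges lying, away from the tip, in an open half-plane, and your formula for $\int e^{-\tau z}\,dx$ holds only there. The union wedge can have aperture $\ge\pi$: the second example above does, and so does $\Sigma_1\cap B_\varepsilon$ alone at an exposed reflex vertex of a concave polygon. On such a wedge the exponential is not small near the arc, and is exponentially large there once the aperture exceeds $\pi$. So neither your leading-term computation nor the smallness of the boundary terms holds; this, not the lower-order remainders, is the real obstacle. These are not the CGO of \cite{Xiao_2022}. The paper uses Lemma~\ref{lem:CGO}, $e^{-r^\beta e^{\pm i\beta\theta}/h}(1+w_h)$ with $\beta<1/2$. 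These decay on every wedge of aperture $<2\pi$ and, by Lemma~\ref{lem:Asymp}, produce the same angular factor $e^{\mp2i\theta_1}-e^{\mp2i\theta_0}$.

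\textbf{The repair} is to drop Step~1 and run Step~2 directly, without presupposing that $x'$ is a vertex of $\Sigma_2$. Let $\Cor$ be the wedge at $x'$ containing $\Sigma_1\cap B_\varepsilon$ and $\Sigma_2\cap B_\varepsilon$ whose complement in $B_\varepsilon$ is the exterior sector through which $x'$ reaches infinity. Then $u_1-u_2$ has zero Cauchy data on $\partial\Cor\cap B_\varepsilon$. Write $\int_\Cor fw=\int_{\Sigma_1\cap B_\varepsilon}f^{(1)}w-\int_{\Sigma_2\cap B_\varepsilon}f^{(2)}w$ and apply Lemma~\ref{lem:Asymp} to each 1-split piece; gaps and overlaps are handled automatically by linearity. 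This gives
\[
p_1\sin\omega_1e^{-2i\beta_1}=p_2\sin\omega_2e^{-2i\beta_2},
\]
where the right side is $0$ if $x'\notin\Sigma_2$, and $\omega_2=\pi$ is allowed. Since $p_1\sin\omega_1\neq0$, this at once forces $x'\in\Sigma_2$ and $\sin\omega_2\neq0$, so $x'$ is a vertex of $\Sigma_2$. Because $p_j\sin\omega_j\in\RR$, it also gives $e^{2i(\beta_1-\beta_2)}=\pm1$, which yields both stated relations. This is exactly the paper's argument.

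Finally, your rotation step adds nothing: multiplying both sides by the same factor $e^{-2i\psi}$ gives an equivalent identity. It is also unnecessary, since the single complex identity already suffices.
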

\begin{rem}
	If we assume a priori that the limiting value of a unknown polygonal source at every vertex is a fixed constant, Proposition~\ref{prp:inversescattering1} then provides necessary geometric conditions on all ``exposed corners'' of two geometries $\Sigma_1$ and $\Sigma_2$ that yield the same far-field. For example, at any vertex of $\Conv\Sigma_1=\Conv\Sigma_2$ we must have
	\begin{equation*}
		\beta_1=\beta_2,\AND\mbox{either\quad $\alpha_1=\alpha_2$ \quad or\quad $\alpha_1+\alpha_2=\pi$}.
	\end{equation*}
	See Figure~\ref{fig:counterex} for an illustration of such situation.
\begin{figure}[h]
	\begin{subfigure}{.5\linewidth}
		\centering\includegraphics[scale=.4]{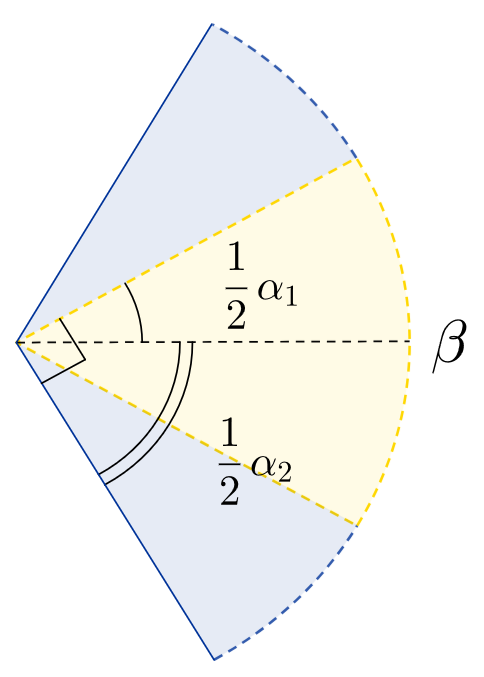} 
		\caption{Local configuration: $\beta_1=\beta_2$, $\alpha_1+\alpha_2=\pi$}
	\end{subfigure}
	\begin{subfigure}{0.5\textwidth}
		\centering
		{\includegraphics[scale=1.8]{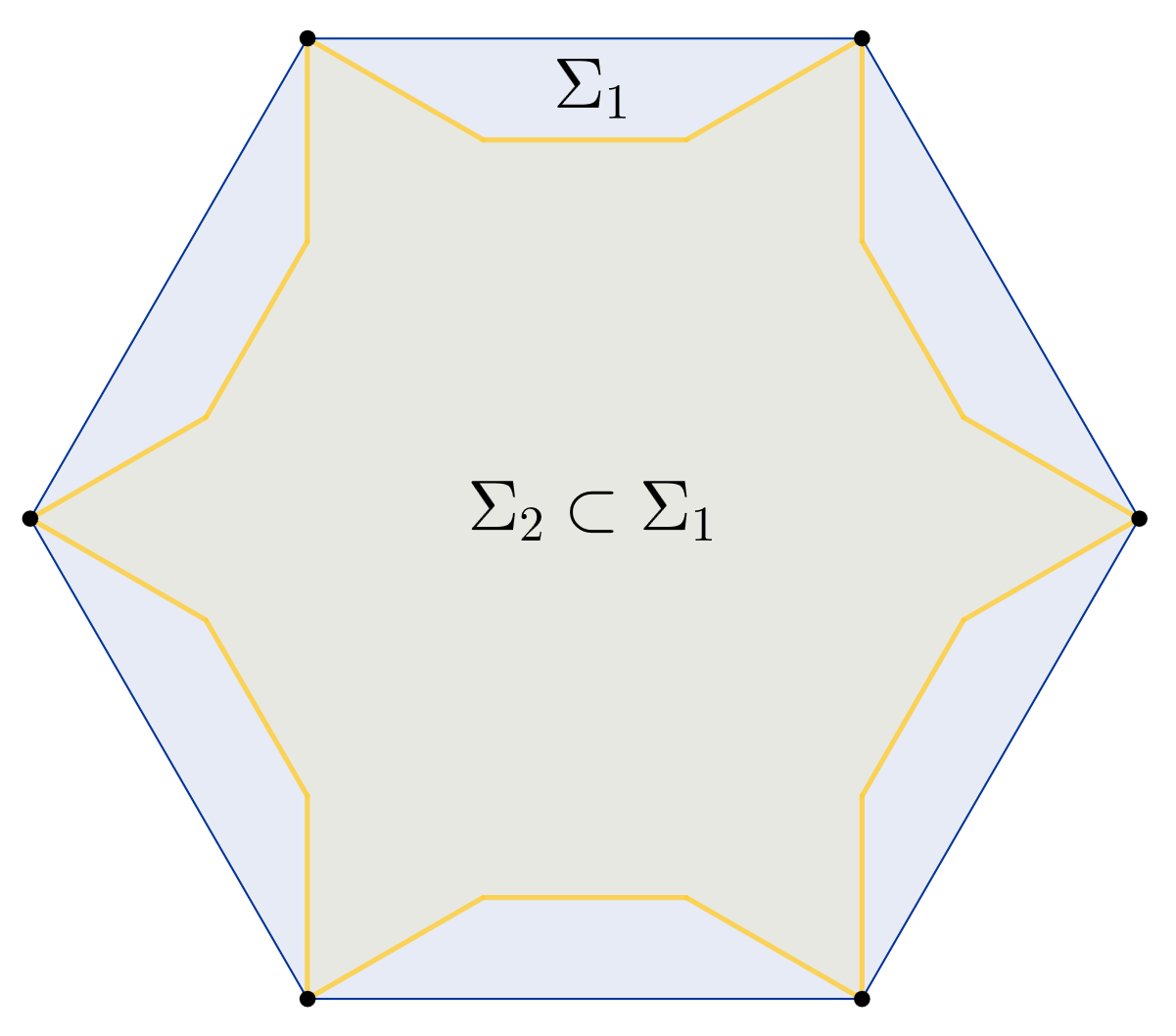}} 
		\caption{Global configuration: two geometries $\Sigma_1,\Sigma_2$}
	\end{subfigure}
\caption{Geometric configurations of two sources that cannot be distinguished by results in this section.}
\label{fig:counterex}
\end{figure}
\end{rem}

\subsection{Proof of Proposition~\ref{prp:source2}}\label{sec:proof}
To prove Proposition~\ref{prp:source2} we derive an integral identity near the corner and analyze it using complex geometrical optics solutions. By extracting the leading order asymptotics of the resulting oscillatory integrals, we obtain algebraic constraints on the limiting values of the source. These constraints are incompatible with the admissibility assumptions and therefore rule out non-radiating configurations.

Let $(f,\Sigma)$ be a source with a corner $\mathcal{K}_{\theta_0,\theta_2,R}$, and suppose there exists a solution $u$ to the local problem \eqref{eq:sourcelocprob} for $\mathcal{K}:=\mathcal{K}_{\theta_0,\theta_2,R}$. We obtain for any $w\in H^1(\mathcal{K}_{\theta_0,\theta_2,R})$ satisfying $\Delta w+k^2\rho w=0$ in $H^1(\mathcal{K}_{\theta_0,\theta_2,R})$ that
\begin{equation}\label{eq:fwint}
    \begin{split}
    	\int_{\mathcal{K}_{\theta_0,\theta_2,R}} fw\;dx
    &=\int_{\mathcal{K}_{\theta_0,\theta_2,R}}-\nabla u\cdot\nabla w+k^2\rho u w\;dx+\int_{\partial \mathcal{K}_{\theta_0,\theta_2,R}}w\partial_\nu u \;dS \\
    &=\int_{\partial \mathcal{K}_{\theta_0,\theta_2,R}\cap\partial B_R}w\partial_\nu u -u\partial_\nu w \;dS.
    \end{split}
\end{equation}
In consideration of \eqref{eq:fwint}, our primary tool in proving Proposition \ref{prp:source2} is the so-called \textit{complex geometrical optics} (CGO) solutions to $\Delta w+k^2 w=0$ given by the following lemma found in \cite[Proposition 2]{Xiao_2022}. We will apply these solutions to the asymptotic analysis of \eqref{eq:fwint} to determine information about $f$ at the vertex of the corner.
\begin{lem}\label{lem:CGO}
    Let $G= \mathcal{K}_{\theta_0,\theta_1,R}$ with $\theta_0,\theta_1\in[-\pi,\pi)$ and $R\in\RR^+$. Given $q\in L^\infty(D)$ and $\beta\in (0,1)$, there is a constant $h_0\in (0,1)$ such that for each $h\in (0,h_0)$, there is a solution $\wl $ to $\Delta w+q w=0$ in $D$ of the form
\begin{align}\label{eq:CGOsoln}
    \wl(x):=e^{-\frac{1}{h} \Phi_l(x)}(1+w_h),
\end{align}
where $\Phi_l(x):=r^\beta e^{(-1)^{l-1}i\beta \theta}$ for $l=1,2$. Moreover, for any $p_1\in (1,\infty)$ and $p_2\in (1, 2/\beta)$, we have the estimates $\|w_h\|_{L^{p_1}(D)}\leq Ch$ and $\|w_h\|_{W^{1,p_2}(D)}\leq C$, where $C>0$ is independent of $h$.
\end{lem}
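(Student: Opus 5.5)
The natural route is a Bukhgeim-type construction: use that $\Phi_l$ is holomorphic ($l=1$, $\Phi_1=z^\beta$) or antiholomorphic ($l=2$, $\Phi_2=\bar z^\beta$) in the sector, and reduce the equation for the remainder $w_h$ to a fixed-point problem for a composition of Cauchy-type operators carrying an oscillatory weight. I treat $l=1$; the case $l=2$ follows by complex conjugation, since $q$ enters linearly and the estimates are symmetric. I will read the lemma's $D$ as the sector $G$, fix a branch of $z^\beta$ with its cut outside $\overline G$ (possible because $\theta_1-\theta_0<2\pi$), and extend $q$ by zero outside $G$.

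\textbf{Step 1 (reduction).} Since $\Phi:=\Phi_1$ is holomorphic, $\Delta\Phi=0$ and $\nabla\Phi\cdot\nabla\Phi=(\partial_1\Phi)^2+(\partial_2\Phi)^2=0$. Plugging $w=e^{-\Phi/h}(1+w_h)$ into $\Delta w+qw=0$ then gives
\[
4\Big(\partial-\tfrac1h\Phi'\Big)\bar\partial w_h + q\,w_h=-q .
\]
Using $\partial-\Phi'/h = e^{\Phi/h}\,\partial\, e^{-\Phi/h}$, I will look for
\[
w_h = T\big(q(1+w_h)\big),\qquad T:=-\tfrac14\,\bar\partial^{-1}\, e^{(\bar\Phi-\Phi)/h}\,\partial^{-1}\, e^{(\Phi-\bar\Phi)/h},
\]
where $\partial^{-1},\bar\partial^{-1}$ are the Cauchy transforms on a bounded set containing $G$. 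Writing $e^{\Phi/h}=e^{(\Phi-\bar\Phi)/h}e^{\bar\Phi/h}$ and noting $\partial e^{\bar\Phi/h}=0$, one checks that $T$ inverts $4(\partial-\Phi'/h)\bar\partial$. The point of this conjugation is that the weights $e^{\pm(\Phi-\bar\Phi)/h}=e^{\pm 2i\,\mathrm{Im}\Phi/h}$ are purely oscillatory and unimodular, so the real part of $\Phi$ (whose sign depends on $\beta\theta$) creates no exponential growth.

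\textbf{Step 2 (estimates for $T$).} I need two bounds:
\[
\|T g\|_{L^{p_1}}\le C h^{\gamma}\|g\|_{L^\infty} \quad\text{for some }\gamma>0,
\qquad
\|Tg\|_{W^{1,p_2}}\le C\|g\|_{L^\infty}.
\]
The $W^{1,p_2}$ bound is straightforward. The Cauchy transform maps $L^p\to W^{1,p}$ and the Beurling transform is bounded on $L^p$, so $\nabla T g$ is controlled by $\|\partial^{-1}(e^{2i\mathrm{Im}\Phi/h}g)\|_{L^{p_2}}$, which is bounded uniformly in $h$. The smallness in $L^{p_1}$ comes from the oscillation of $e^{2i\,\mathrm{Im}\Phi/h}$. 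Its phase has gradient of size $|\Phi'|=\beta r^{\beta-1}$, which does not vanish in $G\setminus\{0\}$. So I will split $G$ into $\{r<\delta\}$ and $\{r>\delta\}$. On the outer region, integration by parts (non-stationary phase) in the Cauchy kernel gives a gain of $h/\delta^{\beta}$ times kernel-singularity losses. The inner region contributes $O(\delta^{2/p})$. Optimizing over $\delta$ yields a positive power $h^\gamma$.

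\textbf{Step 3 (fixed point).} For $h<h_0$, $g\mapsto T(qg)$ is a contraction on $L^{p_1}$ (or on $L^\infty$ if one has that version), so $w_h$ exists. To upgrade $h^\gamma$ to the stated $\|w_h\|_{L^{p_1}}\le Ch$, I would iterate once, writing $w_h=T q + T(q\,w_h)$, and refine the non-stationary-phase estimate applied to $Tq$. The restriction $p_2<2/\beta$ arises when differentiating through the weight: $\nabla\Phi/h\sim h^{-1}r^{\beta-1}$, and a gain of $h$ leaves the factor $r^{\beta-1}$, which belongs to $L^{p_2}$ near $0$ only when $(1-\beta)p_2<2$, i.e.\ $p_2<2/(1-\beta)$. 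Alternatively the constraint is tied to $|x|^{\beta}$-type weights; either way the exact exponent bookkeeping must be tracked.

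\textbf{Main obstacle.} The hardest step is the sharp oscillatory estimate $\|Tg\|_{L^{p}}=O(h)$ in Step 2. The phase $\mathrm{Im}\,z^\beta$ is degenerate at the corner tip, where $|\Phi'|$ blows up for $\beta<1$, and this interacts with both the singularity of the Cauchy kernel and the boundary of the sector. I would first try a stationary-phase argument in polar coordinates, where the phase is $r^\beta\sin(\beta\theta)$ and the change of variables $s=r^\beta$ makes it linear in $s$. I expect the $p$-ranges in the lemma to come out of exactly this computation.
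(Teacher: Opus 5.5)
The paper gives no proof of this lemma; it quotes it from \cite[Proposition 2]{Xiao_2022}, so your sketch has to stand on its own. Steps 1 and 3 set up the right Bukhgeim-type framework, up to a sign slip. With $E:=e^{(\Phi-\bar\Phi)/h}$, the right inverse of $4(\partial-\Phi'/h)\bar\partial$ is $\tfrac14\bar\partial^{-1}E\,\partial^{-1}\overline{E}$ (your exponents are swapped), and $w_h=-T\big(q(1+w_h)\big)$. For $l=2$ you conjugate the $l=1$ solution built for $\bar q$. Your uniform $W^{1,p}$ bound is correct, and in fact holds for every $p\in(1,\infty)$.

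\textbf{The gap.} The estimate that carries the content of the lemma, $\|w_h\|_{L^{p_1}}\le Ch$, is not delivered by Step 2, for three reasons.

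First, you never say which function absorbs the derivative when you integrate by parts. It cannot be the inner transform: $\partial^{-1}(\overline{E}g)$ has no decay uniformly over $g\in L^\infty$ (take $g=E$, which gives $\partial^{-1}1$). The gain must come from the outer $\bar\partial^{-1}$ acting on $F:=\partial^{-1}(\overline{E}g)$. This $F$ is bounded in $W^{1,p}\subset L^\infty$ for $p>2$, uniformly in $h$.

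Second, the tip is not a degeneracy of the phase, since $|\Phi'|=\beta r^{\beta-1}\ge\beta R^{\beta-1}$ on all of $G$. Your cut at $r=\delta$ followed by optimization is exactly what caps the rate at $h^\gamma$ with $\gamma<1$.

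Third, the proposed upgrade (iterate once, then refine the estimate for $Tq$) is circular. Since $q$ is merely $L^\infty$, improving the estimate for $Tq$ is the same problem as improving the operator bound for $T$.

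\textbf{The missing step.} One global integration by parts suffices. Since $\bar\partial E=-h^{-1}\overline{\Phi'}\,E$, the Cauchy--Pompeiu formula gives
\[
\bar\partial^{-1}(EF)=-h\,\frac{EF}{\overline{\Phi'}}+h\,\mathcal{C}_{\partial G}\Big[\frac{EF}{\overline{\Phi'}}\Big]+h\,\bar\partial^{-1}\Big(E\,\bar\partial\frac{F}{\overline{\Phi'}}\Big),
\]
where $\mathcal{C}_{\partial G}$ is the Cauchy integral over $\partial G$.

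Here $|1/\overline{\Phi'}|=\beta^{-1}r^{1-\beta}$ is bounded. Also $|\bar\partial(1/\overline{\Phi'})|=\beta^{-1}(1-\beta)r^{-\beta}$, which lies in $L^s(G)$ exactly when $s<2/\beta$. This is where the threshold $2/\beta$ comes from, not from $|\Phi'|\in L^p$ as you guessed.

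Take $p>2$ and $s\in(2,2/\beta)$. Then:
\begin{itemize}
\item the first term is $O(h)$ in $L^\infty$;
\item the third term is $O(h)$ in $W^{1,\min(p,s)}\subset L^\infty$;
\item the second term is the Cauchy integral of an $O(h)$ bounded density on the Lipschitz curve $\partial G$, hence $O(h)$ in every $L^{p_1}(G)$ with $p_1<\infty$.
\end{itemize}
So $\|Tg\|_{L^{p_1}}\le Ch\|g\|_{L^p}$. This gives both the contraction and $\|w_h\|_{L^{p_1}}\le Ch$ at once, with no splitting.

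Incidentally, any rate $h^\gamma$ with $\gamma>0$ would suffice for the paper's use in \eqref{eq:whterm}, by taking $\lambda>1-\gamma\beta/2$. The lemma as stated, however, asserts $Ch$.
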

In the following, we shall always take $\beta\in(0,1/2)$ when applying Lemma~\ref{lem:CGO}.
Note that $\Re \Phi_l(x)=|x|^\beta \cos(\beta \theta)$. Then there exists a constant $\delta>0$ such that
\begin{align}\label{eq:delineq}
    \Re\Phi_l(x)\geq \delta |x|^\beta,
\end{align}
for all $x\in \overline{\mathcal{K}}_{\theta_0,\theta_1,R}$ whenever $-\pi\le\theta_{0}<\theta_1<\pi$.
In order to prove Proposition~\ref{prp:source2}, we will analyze the asymptotic behavior of the identity \eqref{eq:fwint} as $h\to 0$, with $w$ given in Lemma~\ref{lem:CGO}.

The following result concerning the asymptotics of the boundary integral in \eqref{eq:fwint} is standard by using CGO solutions; see, for example \cite[Lemma 4.1]{Xiao_2022}. 
\begin{lem}\label{lem:bterms}
      Let $u\in H^1(\mathcal{K}_{\theta_0,\theta_1,R})$ and Let $\wl $ be a solution to $\Delta w+k^2 w=0$ of the form \eqref{eq:CGOsoln} given by Lemma \ref{lem:CGO} in $D=\Cor_{\theta_{0}-\epsilon,\theta_{1}+\epsilon,R}$ for some $\epsilon>0$, where \eqref{eq:delineq} is satisfied in $D$. Then
    \begin{align*}
        \int_{\partial \mathcal{K}_{\theta_0,\theta_1,R}\cap\partial B_R}\wl \partial_\nu u -u\partial_\nu \wl  \;dS=O(h^{-1}e^{-\frac{1}{h} R^\beta \delta}),\qquad\mbox{as $h\to 0$}.
    \end{align*}
\end{lem}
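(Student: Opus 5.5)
The plan is to bound the boundary integral over the arc $\partial\mathcal{K}_{\theta_0,\theta_1,R}\cap\partial B_R$ directly, using the pointwise decay of $e^{-\Phi_l/h}$ on $|x|=R$ coming from \eqref{eq:delineq}, together with trace estimates for $u$ and $w_h$. Since $\wl$ is defined on the slightly larger sector $D=\Cor_{\theta_0-\epsilon,\theta_1+\epsilon,R}$ and satisfies \eqref{eq:delineq} on $\overline D$, every point $x$ of the arc has $|e^{-\Phi_l(x)/h}|=e^{-\Re\Phi_l(x)/h}\le e^{-\delta R^\beta/h}$. The arc is a compact piece of $\partial D$ on which the geometry is smooth, and it stays away from the tip. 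So we may restrict to a region such as $D\cap\{R/2<|x|<R\}$. There all weights are bounded and smooth, and standard trace theorems apply.

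The first step is to write $\wl=e^{-\Phi_l/h}(1+w_h)$ and
\[
\partial_\nu\wl=e^{-\Phi_l/h}\Big(-\tfrac1h\partial_\nu\Phi_l\,(1+w_h)+\partial_\nu w_h\Big).
\]
On $|x|=R$ we have $|\nabla\Phi_l|=\beta R^{\beta-1}$, which is bounded. Hence
\[
|\partial_\nu\wl|\le e^{-\delta R^\beta/h}\big(Ch^{-1}|1+w_h|+|\nabla w_h|\big).
\]
The second step controls the traces. Because $u\in H^1$, its trace $u$ lies in $H^{1/2}\subset L^2$ on the arc. For $w_h$, the bound $\|w_h\|_{W^{1,p_2}(D)}\le C$ with $p_2>1$ gives a trace in $L^{p_2}$ on the arc, which is enough to pair $w_h$ with $\partial_\nu u$ in a weak sense.

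The term $u\,\partial_\nu\wl$ is where the gradient of $w_h$ appears on the boundary, and this is the main obstacle. The clean way around it is to use \eqref{eq:fwint}-type identities to interpret the boundary integral weakly, as $\int_{\mathcal K}(\nabla u\cdot\nabla(\chi\wl)-\dots)$ with a cutoff $\chi$ supported near $|x|=R$. Away from the tip, interior/local elliptic regularity for $\Delta w+k^2w=0$ (and for $u$, which satisfies an equation with bounded coefficients and $L^2$ right-hand side) upgrades $w_h$ and $u$ to $H^2$ in the annular region near the arc. The local $H^2$ norm of $w_h$ there is controlled by its $L^{p_1}$ and equation norms, with at most polynomial growth in $1/h$.

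The final step collects the estimates. Every contribution is then of the form $e^{-\delta R^\beta/h}$ times at most $h^{-1}$, the $h^{-1}$ coming from the derivative of the phase $\Phi_l$. This gives $O(h^{-1}e^{-\delta R^\beta/h})$. If the regularity step produces extra polynomial factors, I would absorb them by shrinking $\delta$ slightly, noting $R^\beta\delta$ can be replaced by any smaller constant.
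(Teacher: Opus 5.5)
\textbf{There is a genuine gap: the step that handles the normal derivatives on the arc does not work as written.} Your skeleton is the standard one: the pointwise bound $|e^{-\Phi_l/h}|\le e^{-\delta R^\beta/h}$ on the arc, the factor $h^{-1}$ from $\partial_\nu\Phi_l/h$, and polynomial control of $w_h$.

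The problem is where the arc sits. The arc $\{|x|=R\}$ lies on $\partial\mathcal{K}$ and, as you note yourself, on $\partial D$, because the CGO sector in the statement has the same radius $R$. With only $u\in H^1(\mathcal{K})$ and $w_h\in W^{1,p_2}(D)$, neither $\partial_\nu u$ nor $\partial_\nu w_h$ has a trace there. Interior or local elliptic regularity gives nothing up to a boundary portion that carries no boundary condition. So the $H^2$ bounds ``in the annular region near the arc'' are not available. For $u$ you are also adding an equation that the lemma does not assume.

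The cutoff fallback does not rescue the stated rate. Rewriting the arc integral with an $h$-independent cutoff $\chi$ produces volume integrals over the support of $\nabla\chi$. There you only have $|e^{-\Phi_l/h}|\le e^{-\delta r_*^\beta/h}$, with $r_*<R$ the inner radius of that support. That is an exponential loss, and shrinking $\delta$ ``slightly'' absorbs polynomial factors, not a change of radius. Such a cutoff also meets the straight edges of $\partial\mathcal{K}$, on which the lemma imposes nothing. The same objection applies to pairing the trace of $w_h$ with $\partial_\nu u$ ``in a weak sense''. So your final claim, that every contribution is $e^{-\delta R^\beta/h}$ times at most $h^{-1}$, is unsupported.

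\textbf{The missing step is to place the arc in the interior of both domains.}

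For $w$: Lemma~\ref{lem:CGO} may be applied on a sector of larger radius, e.g.\ $\Cor_{\theta_0-\epsilon,\theta_1+\epsilon,2R}$. The angular widening by $\epsilon$ already handles the endpoints of the arc.

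For $u$: in every use of the lemma, $u$ is $H^2$ on a neighbourhood of the arc. The global fields are in $H^2_{\mathrm{loc}}(\RR^2)$. In the local propositions you can shrink $R$ and extend $u$ (resp.\ $v-u$) by zero across the straight edges, which is legitimate because its Cauchy data vanish there.

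With the arc interior, the estimates go through:
\begin{enumerate}[(i)]
\item Since $\Phi_l$ is holomorphic or antiholomorphic, $\nabla\Phi_l\cdot\nabla\Phi_l=\Delta\Phi_l=0$. Hence $\Delta w_h=\frac{2}{h}\nabla\Phi_l\cdot\nabla w_h-k^2(1+w_h)$.
\item Take $p_2\ge 2$, which is allowed since $2/\beta>4$. The interior $H^2$ estimate then gives $\|w_h\|_{H^2}\le Ch^{-1}$ near the arc.
\item Consequently $\|\nabla w_h\|_{L^2(\mathrm{arc})}\le Ch^{-1}$ and $\|w_h\|_{L^2(\mathrm{arc})}\le C$, while $u,\partial_\nu u\in L^2(\mathrm{arc})$.
\end{enumerate}
Combined with your pointwise bound, this yields exactly $O(h^{-1}e^{-\delta R^\beta/h})$.

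Two alternatives are worth noting, though your sketch establishes neither. A cutoff with a transition layer of width $O(h)$ at $|x|=R$ would also keep the rate. And for the applications in the paper, any bound $O(e^{-c/h})$ with $c>0$ would suffice.
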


In order to analyze the asymptotic behavior of the LHS term in \eqref{eq:fwint}, we shall separate the integral in parts where $f$ does not ``split''.
\begin{lem}\label{lem:Asymp}
 If $f$ is 1-split in $\mathcal{K}_{\theta_0,\theta_1,R}$ (see, Definition~\ref{dfn:splitcorner}), then, for $w^{(l)}$ as in Lemma~\ref{lem:bterms},
	\begin{equation*}
		\int_{\Cor_{\theta_{0},\theta_{1}},\varepsilon}f\wl dx=(-1)^{l-1}i\frac{\Gamma(2/\beta)}{2\beta } \Pare{e^{(-1)^l2i\theta_1}-e^{(-1)^l2i\theta_0}} f(0)h^{2/\beta} +o(h^{2/\beta})\qquad \mbox{as $h\to 0$},
	\end{equation*}
with some $\varepsilon\in\RR_+$ independent of $h$..
\end{lem}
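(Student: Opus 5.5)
The plan is to split both factors into a leading part and a remainder, evaluate the leading part explicitly in polar coordinates, and show every remainder is $o(h^{2/\beta})$. Write $p:=f(0)$ for the limiting value in the 1-split sector, take $\varepsilon\le R$, and use $\wl=e^{-\Phi_l/h}(1+w_h)$ from Lemma~\ref{lem:CGO}. Then
\begin{equation*}
	\int_{\Cor_{\theta_0,\theta_1,\varepsilon}} f\wl\,dx = p\int e^{-\Phi_l/h}\,dx + \int (f-p)e^{-\Phi_l/h}\,dx + \int f\,e^{-\Phi_l/h}w_h\,dx =: I_1+I_2+I_3 .
\end{equation*}
The only estimates needed are the decay bound \eqref{eq:delineq}, $\Re\Phi_l\ge\delta|x|^\beta$, and the bound $|f-p|\le C|x|^{\alpha}$ from Definition~\ref{dfn:splitcorner}. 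The latter also shows that $f$ is bounded on the sector.

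\emph{Leading term $I_1$.} In polar coordinates, $I_1=p\int_{\theta_0}^{\theta_1}\int_0^\varepsilon e^{-s r^\beta}r\,dr\,d\theta$ with $s:=h^{-1}e^{(-1)^{l-1}i\beta\theta}$, and $\Re s\ge \delta/h>0$ uniformly in $\theta$. Extending the $r$-integral to $(0,\infty)$ costs $O(e^{-\delta\varepsilon^\beta/(2h)})$, which is negligible. The substitution $t=r^\beta$ together with the standard formula $\int_0^\infty e^{-st}t^{2/\beta-1}dt=\Gamma(2/\beta)s^{-2/\beta}$ (valid for $\Re s>0$ by analytic continuation from real $s$, using the principal branch) gives
\begin{equation*}
	\int_0^\infty e^{-sr^\beta}r\,dr=\frac{\Gamma(2/\beta)}{\beta}\,h^{2/\beta}e^{(-1)^{l}2i\theta}.
\end{equation*}
Integrating in $\theta$ uses $\int_{\theta_0}^{\theta_1}e^{(-1)^l2i\theta}d\theta=\frac{(-1)^{l-1}i}{2}\pare{e^{(-1)^l2i\theta_1}-e^{(-1)^l2i\theta_0}}$. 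This yields exactly the stated main term.

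\emph{Remainders $I_2$ and $I_3$.} For $I_2$, the bounds above give $|I_2|\le C\int_{\theta_0}^{\theta_1}\int_0^\infty r^{1+\alpha}e^{-\delta r^\beta/h}dr\,d\theta=O(h^{(2+\alpha)/\beta})=o(h^{2/\beta})$. For $I_3$, since $f$ is bounded, Hölder's inequality gives
\begin{equation*}
	|I_3|\le C\,\|e^{-\Phi_l/h}\|_{L^{q}}\,\|w_h\|_{L^{p_1}}\le C\,h^{2/(q\beta)}\cdot h,
\end{equation*}
where $1/q+1/p_1=1$. Here the scaling $r=h^{1/\beta}\rho$ gives $\|e^{-\Phi_l/h}\|_{L^q}^q\le\int e^{-q\delta r^\beta/h}dx=O(h^{2/\beta})$.

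The main point to check is the choice of exponents in $I_3$. We need $1+\frac{2}{q\beta}>\frac{2}{\beta}$, that is, $\frac{2}{\beta}\pare{1-\frac1q}<1$. This holds once $q>1$ is close enough to $1$, equivalently once $p_1$ is large enough, and Lemma~\ref{lem:CGO} permits any $p_1\in(1,\infty)$. Hence $I_3=o(h^{2/\beta})$. Combining the three pieces proves the lemma, with $\varepsilon$ any fixed number in $(0,R]$, independent of $h$.
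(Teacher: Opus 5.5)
Your proposal is correct and follows essentially the same route as the paper. You use the same three-term split $f(0)e^{-\Phi_l/h}+(f-f(0))e^{-\Phi_l/h}+f\,w_h e^{-\Phi_l/h}$. You evaluate the leading term via polar coordinates and the Gamma integral, which is what Lemma~\ref{gammest} packages together with the exponentially small tail. You then bound the $w_h$ term with H\"older's inequality; your $q$ plays the role of the paper's $1/\lambda$, so your condition $\frac{2}{\beta}\bigl(1-\frac{1}{q}\bigr)<1$ is exactly the paper's $\lambda\in(1-\beta/2,1)$.

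Your remark that the split estimate itself makes $f$ bounded on the sector is a small but welcome addition, since the paper uses $\|f\|_{L^\infty}$ there without comment.
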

For the proof of Lemma~\ref{lem:Asymp}, we will make use of the following result; See for instance from \cite{Xiao_2022}.
\begin{lem}\label{gammest}
	Given $\varepsilon, b_0>0$ and $\mu,b_1\in\mathbb{C}$ such that $\Re{\mu},\Re{b_1}>0$, we have
	\begin{align*}
		\left|\frac{\Gamma(b_1/b_0)}{b_0}\mu^{-b_1/b_0}-\int_0^\varepsilon t^{b_1-1}e^{-\mu t^{b_0}}\;dt\right|&\leq \frac{\Gamma(\Re{b_1}/b_0)}{b_0}(2/\Re\mu)^{\Re{b_1}/b_0}e^{-(\Re{\mu}) \varepsilon^{b_0}/2}.
	\end{align*}
\end{lem}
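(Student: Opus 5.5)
The plan is to write the truncated integral as the full integral over $(0,\infty)$ minus a tail over $(\varepsilon,\infty)$. The full integral should equal the Gamma expression exactly, and the tail should be bounded by absolute values with half of the decay factored out.

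\emph{Step 1: the full integral.} First I verify that, for $\Re\mu>0$, $\Re b_1>0$ and $b_0>0$,
\begin{equation*}
	\int_0^\infty t^{b_1-1}e^{-\mu t^{b_0}}\,dt=\frac{\Gamma(b_1/b_0)}{b_0}\,\mu^{-b_1/b_0},
\end{equation*}
where $\mu^{-b_1/b_0}$ uses the principal branch of the logarithm on the right half plane.
\begin{itemize}
	\item The substitution $s=t^{b_0}$, so that $dt=b_0^{-1}s^{1/b_0-1}\,ds$, turns the left side into $b_0^{-1}\int_0^\infty s^{a-1}e^{-\mu s}\,ds$ with $a:=b_1/b_0$ and $\Re a>0$.
	\item For real $\mu>0$, the rescaling $s\mapsto s/\mu$ gives $\Gamma(a)\mu^{-a}$.
	\item Both sides are holomorphic in $\mu$ on $\{\Re\mu>0\}$. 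The integral is holomorphic by locally uniform domination, since $|s^{a-1}e^{-\mu s}|\le s^{\Re a-1}e^{-cs}$ whenever $\Re\mu\ge c>0$.
	\item The identity theorem then extends the equality to all $\mu$ with $\Re\mu>0$.
\end{itemize}
This justification of the complex-parameter Gamma identity, including consistency of the branch, is the only delicate point. A contour-rotation argument would also work, but analytic continuation is cleaner.

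\emph{Step 2: the tail.} By Step 1, the quantity inside the absolute value equals $\int_\varepsilon^\infty t^{b_1-1}e^{-\mu t^{b_0}}\,dt$. For $t>0$ we have $|t^{b_1-1}|=t^{\Re b_1-1}$ and $|e^{-\mu t^{b_0}}|=e^{-(\Re\mu)t^{b_0}}$. For $t\ge\varepsilon$ we split the exponential:
\begin{equation*}
	e^{-(\Re\mu)t^{b_0}}\le e^{-(\Re\mu)\varepsilon^{b_0}/2}\,e^{-(\Re\mu)t^{b_0}/2}.
\end{equation*}
Hence the tail is bounded in modulus by
\begin{equation*}
	e^{-(\Re\mu)\varepsilon^{b_0}/2}\int_0^\infty t^{\Re b_1-1}e^{-(\Re\mu/2)t^{b_0}}\,dt .
\end{equation*}

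\emph{Step 3: the remaining integral.} I apply Step 1 again, this time with real parameters $b_1\to\Re b_1$ and $\mu\to\Re\mu/2$. This gives exactly $\frac{\Gamma(\Re b_1/b_0)}{b_0}\,(2/\Re\mu)^{\Re b_1/b_0}$, which yields the stated estimate.
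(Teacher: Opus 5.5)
Your proof is correct and complete. The paper gives no proof of this lemma and only cites Xiao 2022. Your argument is the standard one for this estimate: evaluate the integral over $(0,\infty)$ exactly, using the substitution $s=t^{b_0}$ and analytic continuation in $\mu$ with the principal branch of $\mu^{-b_1/b_0}$ (fixing that branch is needed for the statement to hold). Then bound the remaining tail over $(\varepsilon,\infty)$ by factoring out half of the exponential decay.
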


\begin{proof}[Proof of Lemma~\ref{lem:Asymp}]
	Note for $l=1,2$ that
	\begin{equation}\label{eq:leading}
		\begin{split}
		&\int_{\mathcal{K}_{\theta_0,\theta_1,\varepsilon}}e^{-\frac{1}{h} \Phi_l(x)}dx
		=\int_{\mathcal{K}_{\theta_1,\theta_2,\varepsilon}}e^{-\frac{1}{h} |x|^\beta e^{(-1)^{l-1}i\beta\theta}}\;dx
		=\int_{\theta_1}^{\theta_2}\int_0^\varepsilon r e^{-\frac{1}{h} r^\beta e^{(-1)^{l-1}i\beta\theta} }\;drd\theta \\
		&\qquad=\frac{\Gamma(2/\beta)}{\beta }h^{2/\beta}\int_{\theta_0}^{\theta_1} e^{(-1)^li2\theta} d\theta +O(h^{\alpha_1} e^{-\delta \varepsilon^\beta/2h}) \\
		&\qquad=(-1)^{l-1}i\frac{\Gamma(2/\beta)}{2\beta } \pare{e^{(-1)^l2i\theta_1}-e^{(-1)^l2i\theta_0}} h^{2/\beta} +O(h^{\alpha_1} e^{-\delta \varepsilon^\beta/2h}),
	\end{split}
	\end{equation}
	where we have applied Lemma~\ref{gammest}, and $\alpha_1$ is a finite power independent of $h$.
	Similarly, we can derive that
	\begin{equation}\label{eq:Estr^a}
		\int_{\mathcal{K}_{\theta_0,\theta_1,\varepsilon}}|x|^\alpha e^{-\frac{1}{h} |x|^\beta e^{(-1)^{l-1}i\beta\theta}}\;dx
		\le C h^{(2+\alpha)/\beta}.
	\end{equation}
	Applying Lemmas \ref{lem:CGO} and \ref{gammest} we have for $\lambda\in(0,1)$ that
	\begin{equation}\label{eq:whterm}
		\begin{split}
			&\left|\int_{\mathcal{K}_{\theta_0,\theta_1,\varepsilon}} fw_h e^{-\Phi_l/h}\;dx \right|
		\leq \|f\|_{L^\infty}\int_{\mathcal{K}_{\theta_0,\theta_1,\varepsilon}} |w_h| e^{-|x|^\beta\cos(\beta\theta)/h}\;dx   \\
		&\quad\quad\quad\leq \|f\|_{L^\infty}\|w_h\|_{L^{\frac{1}{1-\lambda}}(\mathcal{K}_{\theta_0,\theta_1,\varepsilon})}\PAre{\int_{\mathcal{K}_{\theta_0,\theta_1,\varepsilon}} e^{-\delta |x|^\beta/(\lambda h)}\;dx}^{\lambda} \\
		&\quad\quad\quad\leq C h\|f\|_{L^\infty({\mathcal{K}_{\theta_0,\theta_1,\varepsilon}})}(\theta_1-\theta_0)^{\lambda}\left(\int_0^\varepsilon re^{-\delta r^\beta/(\lambda h)}\;dr\right)^{\lambda}
		\leq C_1 h^{1+2\lambda/\beta}.
		\end{split}
	\end{equation}
	Therefore,
	\begin{equation*}
		\begin{split}
			\int_{\Cor_{\theta_{0},\theta_{1}},\varepsilon}f\wl\;dx
			&=\int_{\mathcal{K}_{\theta_0,\theta_1,\varepsilon}}  f(0) e^{-\Phi_l/h}\;dx
			+\int_{\mathcal{K}_{\theta_0,\theta_1,\varepsilon}} (f-f(0)) e^{-\Phi_l/h}\;dx
			+\int_{\mathcal{K}_{\theta_0,\theta_1,\varepsilon}}  fw_h e^{-\Phi_l/h}\;dx
			\\&=(-1)^{l-1}i\frac{\Gamma(2/\beta)}{2\beta } \pare{e^{(-1)^l2i\theta_1}-e^{(-1)^l2i\theta_0}} f(0)h^{2/\beta} +o(h^{2/\beta}),
		\end{split}
	\end{equation*}
	provided that $\lambda\in(1-\beta/2,1)$.
\end{proof}

We are now in position to prove Proposition ~\ref{prp:source2}.
\begin{proof}[Proof of Proposition~\ref{prp:source2}]
	Let $\Cor=\Cor_{\theta_0,\theta_2, R}$ with the two split apertures $\omega_1=\theta_1-\theta_0$ and $\Omega_2=\theta_2-\theta_1$. Denote $(p_1,p_2)=(f_1(0),f_2(0))$ as the vector of limiting value of $f$ at the corner.
	Applying Lemmas~\ref{lem:bterms}~and~\ref{lem:Asymp} to the identity \eqref{eq:fwint}, we obtain
	\[
	\int_\Cor fw^{(l)}\,dx
	=
	C_l h^{2/\beta}
	+o(h^{2/\beta}),
	\]
	where the coefficient $C_l$ with 
	\[
	C_l=
	\Bigl(e^{(-1)^l2i\theta_1}
	-e^{(-1)^l2i\theta_0}\Bigr)p_1
	+
	\Bigl(e^{(-1)^l2i\theta_2}
	-e^{(-1)^l2i\theta_1}\Bigr)p_2,
	\] 
	is obtained by summing the leading order contributions from the two split sectors, while the boundary integral is exponentially small by Lemma~\ref{lem:bterms}. 
	Since the boundary integral is exponentially small,
	identity \eqref{eq:fwint} implies $C_l=0$ for $l=1,2$,
	which yields 
	\begin{equation}\label{eq:fsys1}
		\Pare{e^{(-1)^l2i\theta_1}-e^{(-1)^l2i\theta_0}} p_1+\Pare{e^{(-1)^l2i\theta_2}-e^{(-1)^l2i\theta_1}} p_2=0,\qquad l=1,2.
	\end{equation}

	Assume the contrary that $\Cor$ is an admissible $2$-split corner of $f$. Suppose the total aperture satisfies $\omega=\theta_2-\theta_0=\pi$, as in Case~\ref{cs:pitot} from Definition~\ref{dfn:splitcorner}. We then infer from \eqref{eq:fsys1} that
	\begin{equation*}
		\Pare{e^{(-1)^l2i\omega_1}-1} \pare{p_1- p_2}=0,
	\end{equation*}
	which, since in this case $\omega_1\in(0, \pi)$, we must have $p_1=p_2$. The admissibility then implies that conditions of Case~\ref{cs:fn0} in Definition~\ref{dfn:splitcorner} are satisfied. In particular, $\theta_2-\theta_0\neq\pi$. Assume without loss of generality that $p_1\neq 0$, and $\omega_1\neq \pi$ or $\omega_2= \pi$. 
	If $\omega_2= \pi$, by \eqref{eq:fsys1} again we observe that 
	\begin{equation*}
		\Pare{1-e^{-(-1)^l2i\omega_1}} p_1=0,
	\end{equation*}
	which is impossible as $p_1\neq 0$ and $\omega_1\in(0,\pi)$ in this case.
	Thus we must have $p_1\neq 0$ and $\omega_1\neq \pi$, which combining \eqref{eq:fsys1} again yields $p_2\neq 0$ and $\omega_2\neq\pi$. 
	However, the relations \eqref{eq:fsys1} form a homogeneous linear system $A(p_1,p_2)^T$
	where
	\[
	A=\begin{bmatrix}
		e^{-2i\theta_1}-e^{-2i\theta_0}
		&
		e^{-2i\theta_2}-e^{-2i\theta_1}
		\\[2mm]
		e^{2i\theta_0}-e^{2i\theta_1}
		&
		e^{2i\theta_1}-e^{2i\theta_2}
	\end{bmatrix}.
	\]
	A direct calculation shows that
	\[
	\det A
	=
	-8i
	\sin(\theta_2-\theta_0)
	\sin\omega_1
	\sin\omega_2.
	\]	
	Since none of
	$\theta_2-\theta_0$,
	$\omega_1$,
	or $\omega_2$
	equals $\pi$,
	we have $\det A\neq0$.
	Hence the system admits only the trivial solution $p_1=p_2=0$, which contradicts admissibility.
\end{proof}

\subsection{Proofs of Theorems~\ref{thm:splitscatter}~and~\ref{thm:inversescattering} and Propositions~\ref{prp:invSourceGeneral}~and~\ref{prp:inversescattering1}}
Theorems~\ref{thm:splitscatter}~and~\ref{thm:inversescattering} as well as Proposition~\ref{prp:invSourceGeneral} are consequences of Proposition \ref{prp:source2}.
\begin{proof}[Proof of Theorem \ref{thm:splitscatter}]
	We argue by contradiction. Suppose that a source $(f,\Sigma)$ with an admissible 2-split corner $\mathcal{K}$ is non-radiating. Then, by Rellich's lemma and unique continuation, the corresponding solution $u$ to the forward scattering problem \eqref{eq:sourcescattering} satisfies $u\equiv 0$ outside $\Supp_tf$. As a consequence, $u$ satisfies \eqref{eq:sourcelocprob}. However, this contradicts Proposition \ref{prp:source2}, which states that $\Cor$ is not an admissible 2-split corner of $f$.
\end{proof}
\begin{proof}[Proof of Theorem \ref{thm:inversescattering}]
	Suppose that $(f^{(j)},\Sigma_j)$, $j=1,2$, produce the same far-field. Denote $f=f^{(1)}-f^{(2)}$ and $u=u_{1}-u_{2}$, where $u_{j}$ is the solution to the forward scattering problem \eqref{eq:sourcescattering} with the source term $f^{(j)}$, $j=1,2$. Then $u$ solves \eqref{eq:sourcescattering} for the source $f$ with the far-field $u_\infty\equiv 0$. By Rellich's lemma and unique continuation, we have that $u\equiv 0$ in the exterior of $\tS{f}$. 
	
	Assume, to the contrary, that $\Conv (\Sigma_1)\neq\Conv (\Sigma_2)$. Then we can assume without loss of generality that $\Conv (\Sigma_1)\not\subseteq\Conv (\Sigma_2)$.
	Since both convex hulls are convex polygons, there exists a vertex of $\Conv(\Sigma_1)$ lying outside $\Conv(\Sigma_2)$; Otherwise $\Conv (\Sigma_1)\subseteq\Conv (\Sigma_2)$ by convexity.
	
	Let $x'$ be a vertex of $\Conv (\Sigma_1)$ with $x'\notin\Conv (\Sigma_2)$. Then $f=f^{(1)}$ in $B_R(x')$ for some $R>0$. Notice that $x'$ is a vertex of $\Sigma_1$, which connects to infinity in the exterior of $\Sigma_1\cup\Sigma_2\supseteq\tS f$. Hence  $x'\in\partial\tS{f}$. In particular, $u$ satisfies \eqref{eq:sourcelocprob} on $\mathcal{K}$, which is the corner of $\Sigma_1$, as well as $\tS{f}$, at the vertex $x'$. By Proposition~\ref{prp:source2}, we arrive at a contradiction that $\mathcal{K}$ is an admissible 2-split corner of $f=f^{(1)}$. 
\end{proof}
\begin{proof}[Proof of Proposition~\ref{prp:invSourceGeneral}]
	We need only to prove the result for $\Gamma_1$ in the case when $\partial\Sigma_1\backslash\Sigma_2\neq \emptyset$.
	Let $D$ be a connected component of $\Sigma_1^\circ\backslash\Sigma_2$ such that $\Gamma_1\subseteq \partial D$. 
	By Rellich's lemma  and unique continuation we obtain that  
	\begin{equation*}
		\begin{cases}
			(\Delta+k^2\rho)u_1=f^{(1)}\AnD (\Delta+k^2\rho)u_2=0 &\quad\text{in $D$}, \\
			u_1=u_2 \AnD \partial_\nu u_1=\partial_\nu u_1 &\quad\text{on $\Gamma_1$},
		\end{cases}
	\end{equation*}
	where $u_j$ is the scattering solution to \eqref{eq:sourcescattering} with $f=f^{(j)}$, $j=1,2$. The difference $u=u_1-u_2$ satisfies the local problem \eqref{eq:sourcelocprob} in $D$, with source term $f=f^{(1)}$.
	The result is then a direct consequence of Proposition~\ref{prp:source2} by considering the problem for $u:=u_1-u_2$.
\end{proof}

We proceed with the proof of Proposition~\ref{prp:inversescattering1}.
\begin{proof}[Proof of Proposition~\ref{prp:inversescattering1}]
	Let $x'\in\partial\Sigma_1\cup\partial\Sigma_2$ be a vertex of $\Sigma_1$ that connects to infinity in the exterior of $\Sigma_1\cup\Sigma_2$. Assume without loss of generality that $x'=0$ and $\Sigma_j\cap B_\varepsilon=\mathcal{K}_{\theta_{j,0},\theta_{j,1},\varepsilon}$ with $\theta_{j,0},\theta_{j,1}\in[-\pi,\pi)$, $j=1,2$, for some $\varepsilon>0$, and that $x'$ connects to infinity in the exterior of $\mathcal{K}\cup\Sigma_1\cup\Sigma_2$, where $\mathcal{K}=\mathcal{K}_{\theta_{0},\theta_{1},\varepsilon}$ with $\theta_{0}=\min\{\theta_{1,0},\theta_{2,0}\}$ and $\theta_{1}=\max\{\theta_{1,1},\theta_{2,1}\}$. 
	Denote $f=f^{(1)}-f^{(2)}$ and $u=u_{1}-u_{2}$, where $u_{j}$ is the solution to the forward scattering problem \eqref{eq:sourcescattering} with the source term $f^{(j)}$, $j=1,2$. Then $u$ solves \eqref{eq:sourcescattering} for the source $f$ with the far-field $u_\infty\equiv 0$. By Rellich's lemma and unique continuation, we have that $u\equiv 0$ in the exterior of $\tS{f}\subseteq(\Sigma_1\cup\Sigma_2)$. 
	In particular, $u$ satisfies \eqref{eq:sourcelocprob} in $\mathcal{K}$.
	Analogous to \eqref{eq:fwint}, we observe for any $w\in H^1(\mathcal{K}_{\theta_0,\theta_2,R})$ satisfying $\Delta w+k^2 w=0$ in $H^1(\mathcal{K}_{\theta_0,\theta_2,R})$ that 
	\begin{equation*}
		\begin{split}
			&\int_{\mathcal{K}} fw\;dx
			=\int_{\partial \mathcal{K}\cap\partial B_R}w\partial_\nu u -u\partial_\nu w \;dS
			\\&=\int_{\mathcal{K}_{\theta_{1,0},\theta_{1,1},\varepsilon}} f^{(1)}w\;dx-\int_{\mathcal{K}_{\theta_{2,0},\theta_{2,1},\varepsilon}} f^{(2)}w\;dx.
		\end{split}
	\end{equation*}
	Similar to \eqref{eq:fsys1}, we derive that
	\begin{equation}\label{eq:prf_idAlgebra}
		\Pare{e^{(-1)^l2i\theta_{1,1}}-e^{(-1)^l2i\theta_{1,0}}} f^{(1)}(0)+\Pare{e^{(-1)^l2i\theta_{2,1}}-e^{(-1)^l2i\theta_{2,0}}} f^{(2)}(0)=0,\qquad l=1,2.
	\end{equation}
	where $f^{(j)}(0)$ is the limiting value of $f^{(j)}$ at $x'$ 
	as defined in Definition~\ref{dfn:splitcorner}, $j=1,2$.
	We observe that
	\begin{equation*}
		e^{(-1)^l2i\theta_{j,1}} - e^{(-1)^l2i\theta_{j,0}}	= (-1)^{l}2ie^{(-1)^{l}i(\theta_{j,1}+\theta_{j,0})}\sin(\theta_{j,1}-\theta_{j,0}),\qquad j,l=1,2.
	\end{equation*}
	Since $x'$ is a vertex of $\Sigma_1$, we have $\sin(\theta_{1,1}-\theta_{1,0})\neq0$. Recalling that $f^{(j)}(0)\neq 0$, $j=1,2$, then \eqref{eq:prf_idAlgebra} implies $\sin(\theta_{1,1}-\theta_{1,0})\neq0$ and
	\begin{equation*}
		e^{2i(\theta_{1,1}+\theta_{1,0})} = e^{2i(\theta_{2,1}+\theta_{2,0})}.
	\end{equation*}
	In other words, $x'$ is also a vertex of $\Sigma_2$ and 
	\begin{equation*}
		\beta_1=\beta_2+m\pi/2, \qquad\mbox{where $m\in\mathbb{Z}$ and $2\beta_j=\theta_{j,0}+\theta_{j,1}$, $j=1,2$}.
	\end{equation*}
	Applying \eqref{eq:prf_idAlgebra} again we obtain 
	\begin{equation*}
		f^{(1)}(0)\,\sin(\theta_{1,1}-\theta_{1,0}) = (-1)^{m+1}f^{(2)}(0)\,\sin(\theta_{2,1}-\theta_{2,0}).
	\end{equation*}
\end{proof}

\section{Medium Scattering}\label{sec:medium}
In this section, we consider scattering due to the presence of a penetrable inhomogeneous medium. The forward scattering problem can be formulated as
\begin{equation}\label{eq:mediumscattering}
	\begin{cases}
		\Delta u+k^2\rho u=0 \qquad\text{in $\RR^2$},& \\
		\lim_{r\to\infty} r^{1/2}\left(\frac{\partial u_\text{sc}}{\partial r}-ik u_\text{sc}\right)=0, &
	\end{cases}
\end{equation}
where $\uin=u-\usc$ is a given incident field that satisfies the Helmholtz equation $\Delta \uin+k^2\uin=0$ in $\RR^2$, and the real-valued term $\rho\in L^\infty(\RR^2)$ describes the inhomogeneous medium. 

As in the source scattering setting, we denote by the pair $(\rho-1,\Sigma)$ the contrast $\rho-1$ along with its compact total support $\Sigma:=\tS{(\rho-1)}$, which describes the shape of the medium. We also assume that $\overline{\Sigma^\circ}=\Sigma$.
We say that a medium $\rho$, or $(\rho-1,\Sigma)$, has an (admissible) $n$-split corner if $f:=\rho-1$ has an (admissible) $n$-split corner by Definition~\ref{dfn:splitcorner}. In this case, $(\rho_j(0))_{j=1,\ldots,n}:=(f_j(0)+1)_{j=1,\ldots,n}$ is said to be the limiting value of $\rho$ at the corner. 

The analysis follows a similar local strategy as in Section~\ref{sec:source}, but the interaction between the incident wave and the medium introduces additional compatibility conditions depending on the vanishing order of the incident field. We first establish the direct scattering results and then derive their inverse consequences.

\subsection{Main Results}
The following result states that incident waves with a non-zero value at the corner are always scattered by media that have admissible (1- or) 2-split corners:
\begin{thm}\label{thm:splitobscatter}
     Any medium that has an admissible 2-split corner always scatters incident waves that are not zero at the corner tip.
\end{thm}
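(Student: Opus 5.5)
We argue by contradiction, reducing to the source setting of Section~\ref{sec:source}. Suppose the medium $\rho$ has an admissible 2-split corner $\Cor=\Cor_{\theta_0,\theta_2,R}$ (placed at the origin) and that some incident field $\uin$ with $\uin(0)\neq0$ produces a trivial far-field. By Rellich's lemma and unique continuation, $\usc\equiv0$ in the unbounded component of $\RR^2\backslash\Sigma$. Hence $\usc=\partial_\nu\usc=0$ on $\partial\Cor\cap B_R$. Since $\Delta\uin+k^2\uin=0$, the scattered field satisfies
\begin{equation*}
	(\Delta+k^2\rho)\usc=-k^2(\rho-1)\uin=:F\qquad\text{in }\Cor.
\end{equation*}
Thus $\usc\in H^1(\Cor)$ solves the local problem \eqref{eq:sourcelocprob} with source $F$ and the same bounded background $\rho$.

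The key step is to check that $\Cor$ is an admissible 2-split corner of $F$, after which Proposition~\ref{prp:source2} gives the contradiction. The function $\uin$ is real-analytic, so $|\uin(x)-\uin(0)|\le C|x|$ near $0$. Writing $f=\rho-1$ with $|f-p_j|\le C_j|x|^{\alpha_j}$ on the $j$-th split, we get
\begin{equation*}
	|F-(-k^2\uin(0)p_j)|\le k^2|f-p_j||\uin|+k^2|p_j||\uin-\uin(0)|\le C(|x|^{\alpha_j}+|x|).
\end{equation*}
So $F$ is 2-split at $\Cor$ with limiting values $q_j=-k^2\uin(0)p_j$ and exponents $\min(\alpha_j,1)>0$. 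The split apertures are unchanged. Because $\uin(0)\neq0$, we have $q_1\neq q_2$ if and only if $p_1\neq p_2$, and $q_j\neq0$ if and only if $p_j\neq0$. Therefore each admissibility case, \ref{cs:pitot} or \ref{cs:fn0}, for $\rho-1$ transfers verbatim to $F$.

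The one delicate point is that $F$ and $\uin$ may be complex-valued, while Definition~\ref{dfn:splitcorner} assumes $p_j$ real. There are two ways to handle this. The first is to appeal to the footnote there, splitting $\usc$ into real and imaginary parts; both satisfy \eqref{eq:sourcelocprob} because $\rho$ is real. The second, more directly, is to note that the proof of Proposition~\ref{prp:source2} only uses the linear system \eqref{eq:fsys1}, which is valid for complex $p_j$. In the second route the determinant argument is unchanged, and the cases $\omega=\pi$ and $\omega_2=\pi$ give the same conclusions.

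One should also check that $\usc\in H^1(\Cor)$ and that the integral identity \eqref{eq:fwint} holds with $\rho$ merely in $L^\infty$. This is exactly the setting already assumed in Proposition~\ref{prp:source2}, so no new estimates are needed. With $\Cor$ an admissible 2-split corner of $F$ and a solution of \eqref{eq:sourcelocprob} in hand, Proposition~\ref{prp:source2} is contradicted, and the theorem follows.
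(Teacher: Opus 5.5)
Your proposal is correct and follows essentially the same route as the paper: you reduce to the local source problem \eqref{eq:sourcelocprob} for $u-\uin$ with source $-k^2(\rho-1)\uin$, observe that multiplication by the analytic, nonvanishing $\uin$ preserves the split structure and admissibility, and invoke Proposition~\ref{prp:source2}. Your explicit bookkeeping is more careful than the paper's one-line justification, in particular the rescaled limiting values $-k^2\uin(0)p_j$, the exponents $\min(\alpha_j,1)$, and the complex-valuedness of the $p_j$.
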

Theorem~\ref{thm:splitobscatter} concerns incident waves that are nonvanishing at the corner tip. For higher order vanishing, the situation is considerably more delicate. Instead of unconditional corner scattering, one obtains explicit compatibility conditions relating the split-corner geometry, the limiting values of the contrast, and the vanishing order of the incident wave.
\begin{prp}
	\label{prp:splitcorner}
	Let $(u,v)\in H^1(\mathcal{K})\times H^1(\mathcal{K})$ satisfy
	\begin{equation}\label{eq:inttranscorn}
		\begin{split}
			\Delta u+k^2\rho u=0,\quad \Delta v+k^2 v=0\quad &\text{in $\mathcal{K}$},\\
			u=v,\quad \partial_\nu u=\partial_\nu v\quad&\text{on $\partial \mathcal{K}\cap B_R$}.
		\end{split}
	\end{equation}
	Assume that $\Cor$ is a 2-split corner of $\rho$ and that $v$ is not zero at the corner tip and extends as a solution to $\Delta v+k^2 v=0$ in a neighborhood of the corner. Then the following hold.
	\begin{enumerate}[(i)]
		\item The corner geometry, the limiting value $(p_1,p_2)$ of $\rho-1$, and the vanishing order $N$ of $v$ satisfy
		\begin{equation}\label{eq:Theta}
			\abs{ p_1 \sin{\omega_1}+ e^{i\omega}p_2\sin{\omega_2}}
			=\frac{1}{N+1}\abs{ p_1 \sin{[(N+1)\omega_1]}+ e^{i(N+1)\omega}p_2\sin{[(N+1)\omega_2]}},
		\end{equation}
		where $\omega_j$ is the aperture of the $j$-th split and $\omega=\omega_1+\omega_2$.
		\item If $(p_1,p_2)\neq 0$, then either
		\begin{enumerate}[(a)]
			\item $|a_1|=|a_2|$ where $a_j$ are the coefficients in the first nonzero ($N$th-) term $v_N=(a_1e^{iN\theta}+a_2e^{-iN\theta})r^N$ in the Taylor expansion of $v$, or,
			\item $\omega=m\pi/N$ for some $m\in\NN_+$, and $\omega_j=\pi$ for some $j=1,2$ or
			\begin{equation*}
				\frac{\sin\Pare{(N+1)\omega_1}}{\sin\omega_1}
				=(-1)^m\frac{\sin\Pare{(N+1)\omega_2}}{\sin\omega_2}.
			\end{equation*}
		\end{enumerate}
	In particular, if $p_1=p_2\neq 0$, then $\omega=\pi$.
	\end{enumerate}
\end{prp}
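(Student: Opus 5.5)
The plan is to reduce \eqref{eq:inttranscorn} to the source setting of Proposition~\ref{prp:source2}, with a source that vanishes to order $N$. Set $\tilde u:=u-v$. Then
\[
(\Delta+k^2\rho)\tilde u=-k^2(\rho-1)v\ \text{in }\Cor,\qquad \tilde u=\partial_\nu\tilde u=0\ \text{on }\partial\Cor\cap B_R .
\]
Take $w=w^{(l)}$ to be the CGO solutions of Lemma~\ref{lem:CGO} with $q=k^2\rho$, built on a slightly larger sector $D$ on which \eqref{eq:delineq} holds. The Green identity \eqref{eq:fwint} then gives
\[
\int_\Cor(\rho-1)\,v\,w^{(l)}\,dx=O\bigl(h^{-1}e^{-\delta R^\beta/h}\bigr),
\]
by Lemma~\ref{lem:bterms}. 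Since $v$ extends as a Helmholtz solution near the tip, it is real-analytic there. Its first nonzero homogeneous term is $v_N=(a_1e^{iN\theta}+a_2e^{-iN\theta})r^N$ with $(a_1,a_2)\neq0$, and $|v-v_N|\le Cr^{N+1}$. I split $\Cor$ into the two split sectors, write $\rho-1=p_j+O(r^{\alpha_j})$ on each, and expand the integrand into four pieces:
\begin{enumerate}[(1)]
\item the main term $p_jv_Ne^{-\Phi_l/h}$;
\item $(\rho-1-p_j)v_Ne^{-\Phi_l/h}$, which is $O(h^{(N+2+\alpha_j)/\beta})$ by the analogue of \eqref{eq:Estr^a};
\item $(\rho-1)(v-v_N)e^{-\Phi_l/h}$, which is $O(h^{(N+3)/\beta})$;
\item $(\rho-1)\,v\,w_he^{-\Phi_l/h}$, which needs more care.
\end{enumerate}
For term (4) I would repeat \eqref{eq:whterm}, keeping the weight $|v|\le Cr^N$ inside the factor $e^{-\delta r^\beta/(\lambda h)}$. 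This gives $O\bigl(h^{1+\lambda(N+2)/\beta}\bigr)$, which is $o(h^{(N+2)/\beta})$ once $\lambda\in(1-\beta/(N+2),1)$. So all three remainders are negligible against the leading order $h^{(N+2)/\beta}$.

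Next I compute the leading term with Lemma~\ref{gammest}, taking $b_1=N+2$ and $\mu=e^{\pm i\beta\theta}/h$. For $l=1$ the radial integral contributes $\frac{\Gamma((N+2)/\beta)}{\beta}h^{(N+2)/\beta}e^{-i(N+2)\theta}$. Multiplying by $e^{\pm iN\theta}$ leaves the angular phases $e^{-2i\theta}$ and $e^{-2i(N+1)\theta}$. Define
\[
S_m:=\sum_{j=1,2}p_j\int_{\theta_{j-1}}^{\theta_j}e^{-2im\theta}\,d\theta
=\sum_{j=1,2}\frac{p_j}{m}\,e^{-im(\theta_j+\theta_{j-1})}\sin(m\omega_j).
\]
Setting the coefficient of $h^{(N+2)/\beta}$ to zero for $l=1,2$, and using that $p_j$ is real, yields the linear system
\[
a_1S_1+a_2S_{N+1}=0,\qquad a_2\overline{S_1}+a_1\overline{S_{N+1}}=0 .
\]
After rotating so that $\theta_0=0$ and factoring out a unimodular phase, $|S_m|=\frac1m\bigl|p_1\sin(m\omega_1)+e^{im\omega}p_2\sin(m\omega_2)\bigr|$. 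Because $(a_1,a_2)\neq0$, the determinant $|S_1|^2-|S_{N+1}|^2$ must vanish. This is exactly \eqref{eq:Theta} and proves (i).

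For (ii), I take absolute values in the two equations and subtract them. This gives $(|a_1|-|a_2|)\bigl(|S_1|+|S_{N+1}|\bigr)=0$. Hence either $|a_1|=|a_2|$, which is case (a), or $S_1=S_{N+1}=0$. The condition $S_1=0$ is precisely the system \eqref{eq:fsys1}. The condition $S_{N+1}=0$ says that the two complex numbers $p_1\sin((N+1)\omega_1)$ and $p_2e^{i(N+1)\omega}\sin((N+1)\omega_2)$ cancel. I will take the imaginary part of each condition separately and compare them. If some $\omega_j=\pi$, this is one of the allowed alternatives. Otherwise $S_1=0$ with $p\neq0$ forces $\sin\omega=0$ by the determinant computation $\det A=-8i\sin\omega\,\sin\omega_1\sin\omega_2$, and it fixes the ratio $p_2/p_1=-\sin\omega_1/(e^{i\omega}\sin\omega_2)$. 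Substituting this ratio into $S_{N+1}=0$ should force $e^{iN\omega}$ to be real, that is $\omega=m\pi/N$, together with the stated sine-ratio identity carrying the sign $(-1)^m$. The final claim follows by setting $p_1=p_2$, for which $S_1\propto p_1e^{-i\omega}\sin\omega$. The main obstacle is this last trigonometric case analysis: the signs and phases coming from $e^{i\omega}$ must be tracked carefully to land exactly on the $(-1)^m$ form. I would work it out by writing $e^{i\omega}=\pm1$ and $e^{i(N+1)\omega}=e^{iN\omega}e^{i\omega}$ explicitly.
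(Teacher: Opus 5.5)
Your proposal is correct. Up to the $2\times2$ system it follows the paper's proof: the same Green identity with the CGO solutions $w^{(l)}$, the same four-term splitting, Lemma~\ref{gammest} for the leading term, and the determinant $|S_1|^2-|S_{N+1}|^2$ for (i). Your $S_1,S_{N+1}$ are $-i\sum_j p_jc_{0,j}$ and $-i\sum_j p_jc_{N,j}$ in the paper's notation. (For term (4), H\"older actually gives $O(h^{1+(N+2\lambda)/\beta})$; your exponent is cruder but still suffices.)

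The genuine difference is in (ii). The paper conjugates the second equation and treats the result as a system in $(p_1,p_2)$. It factors the determinant as $(|a_1|^2-|a_2|^2)(c_{0,1}c_{N,2}-c_{0,2}c_{N,1})$ and rewrites the second factor as $\sin\omega_1\sin((N+1)\omega_2)=e^{iN\omega}\sin\omega_2\sin((N+1)\omega_1)$. Your identity $(|a_1|-|a_2|)(|S_1|+|S_{N+1}|)=0$ amounts to treating the same system as linear in $(S_1,S_{N+1})$, with determinant $|a_1|^2-|a_2|^2$. It gives $S_1=S_{N+1}=0$ for the actual real vector $(p_1,p_2)$. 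The paper's condition only says that some complex vector is annihilated by both $p\mapsto\sum_jp_jc_{0,j}$ and $p\mapsto\sum_jp_jc_{N,j}$.

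Your route therefore gives a sharper and cleaner conclusion. If $|a_1|\neq|a_2|$, then $\omega=\pi$ and $p_1=p_2$, or $\omega_1=\pi$ and $p_2=0$, or $\omega_2=\pi$ and $p_1=0$. In each case the corner is, to leading order, a flat interface with a single constant contrast.

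In particular, the step you flag as the main obstacle is empty. Once $\sin\omega=0$ you have $m=N$. The sine-ratio identity with sign $(-1)^N$ then holds automatically, since $\omega_2=\pi-\omega_1$ gives $\sin\omega_2=\sin\omega_1$ and $\sin((N+1)\omega_2)=(-1)^N\sin((N+1)\omega_1)$.

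Your reading of ``$\omega_j=\pi$'' as a stand-alone alternative is also the defensible one. For $\omega_1=\pi$, $p_2=0$, the leading-order identities hold for every $\omega_2$, so $\omega=m\pi/N$ cannot be derived there. The paper's ``equivalent to'' step overlooks this case, in which both sides of its product identity vanish.

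One step needs fixing: the closing claim $p_1=p_2\neq0\Rightarrow\omega=\pi$ must come from (i), not from (ii). Part (ii) gives $S_1=0$ only when $|a_1|\neq|a_2|$. With $p_1=p_2$, (i) reads $(N+1)|\sin\omega|=|\sin((N+1)\omega)|$. You then need the strict inequality $|\sin((N+1)x)|<(N+1)|\sin x|$ for $\sin x\neq0$ and $N\geq1$. The paper obtains it from $1-e^{-2i(N+1)\omega}=(1-e^{-2i\omega})\sum_{m=0}^{N}e^{-2im\omega}$ and the triangle inequality.
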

Note in Proposition~\ref{prp:splitcorner} that we use the standard result that $v_N$, as the first non-zero term in the Taylor series of a solution $v$ to the Helmholtz equation, is harmonic.
\begin{rem}
	Proposition~\ref{prp:splitcorner} shows that non-scattering is possible only under highly restrictive compatibility conditions. In particular, it recovers the classical corner scattering theorem for admissible 1-split corners.
	
	Figure~\ref{fig:bigtheta01} illustrates the exceptional geometries satisfying \eqref{eq:Theta} for several choices of $(p_1,p_2)$ when $N=1$. For each fixed $(p_1,p_2)$, these configurations form a one-dimensional subset of the two-dimensional parameter space $(\omega_1,\omega_2)$, showing that admissible 2-split corners generically scatter. 
	
	A different perspective is obtained by letting $N\to\infty$ in \eqref{eq:Theta}, which yields
	\begin{align*}
		p_1\sin{\omega_1}+p_2e^{i\omega}\sin{\omega_2}=0,
	\end{align*}
	corresponding to a non-admissible 2-split corner. This observation suggests that the exceptional configurations become increasingly rigid as the vanishing order of the incident wave increases.
\end{rem}
\begin{figure}[h!]
	\centering
	\includegraphics[width=0.9\textwidth]{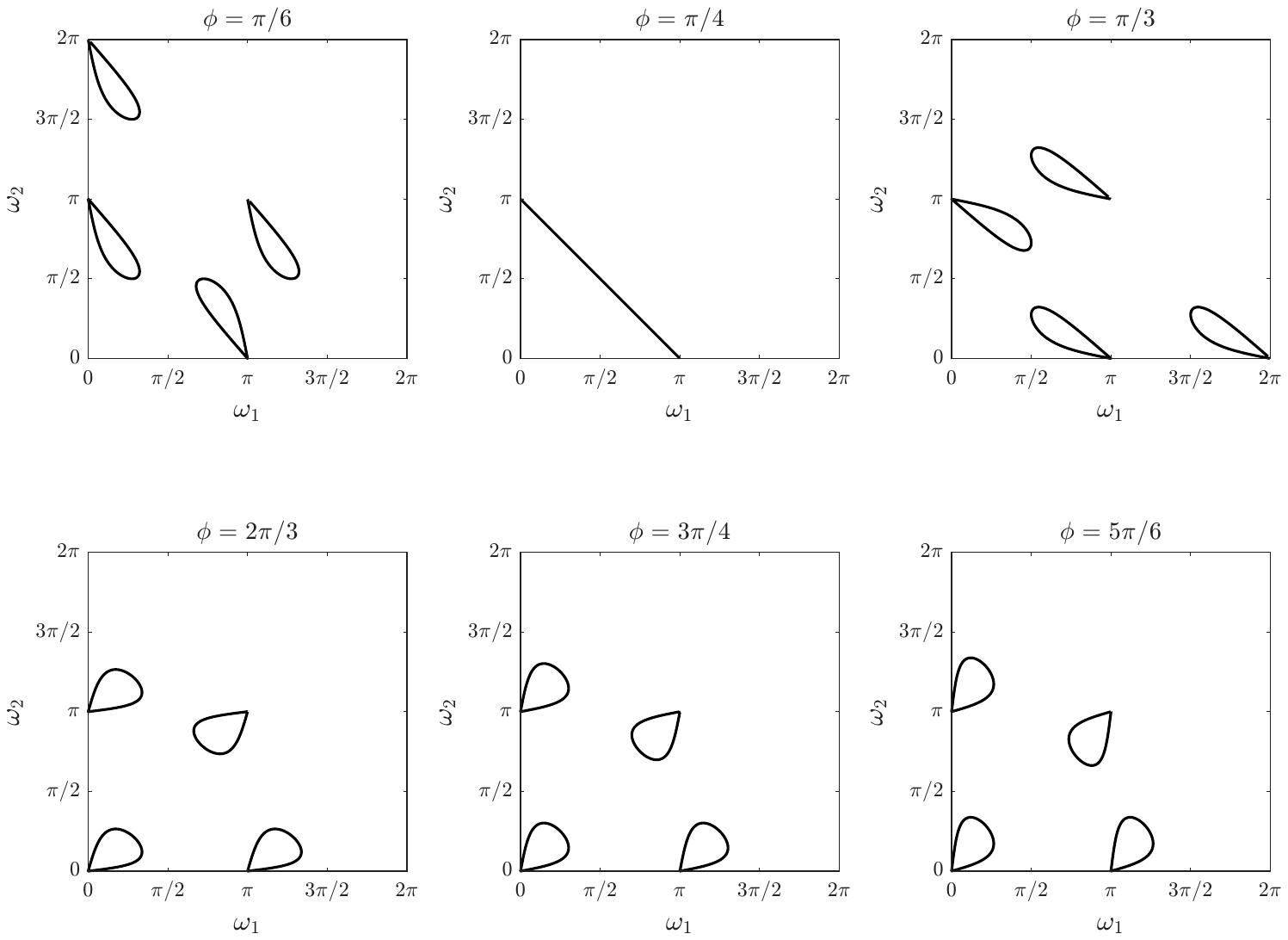}
	\caption{Plots of $\omega_1,\omega_2$ satisfying Equation \eqref{eq:Theta} with $N=1$, where $(p_1,p_2)=(\cos \phi,\sin \phi)p_0$ is assumed. Note for the case when $\phi=\pi/4$, namely if $p_1=p_2$, we must have $\omega=\omega_1+\omega_2=\pi$, which is not an admissible case by Definition~\ref{dfn:splitcorner}.}
    \label{fig:bigtheta01}
\end{figure}

The following class of 2-split corners always scatters every incident wave. 
\begin{dfn}\label{dfn:splitcorner2special}
	Let $(\rho,\Sigma)$ be a medium with a $2$-split corner $\Cor$, with $\omega_j$ the $j$-th split aperture, $j=1,2$, and $(p_1,p_2)$ the limiting value of $\rho-1$ at the corner. The $2$-split corner $\Cor$ is called \textit{degenerate} if (at least) one of the following is satisfied.
	\begin{enumerate}[(i)]
		\item\label{cs:1split} $p_1=p_2\neq 0$ and $\omega:=\omega_1+\omega_2\neq\pi$, namely, $\Cor$ is an admissible $1$-split corner of $\rho$.
		\item\label{cs:flat} $p_1\neq p_2$ and $\omega=\pi$.
		\item\label{cs:partflat} For $j=1$ or $2$, $p_j\neq 0$ and $\omega_{j+1}=\pi$, where $\omega_3:=\omega_1$.
	\end{enumerate}
\end{dfn}
\begin{thm}\label{thm:2splitscatter}
	Any medium possessing a degenerate 2-split corner always scatters any incident wave.
\end{thm}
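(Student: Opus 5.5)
We argue by contradiction. Suppose a medium $(\rho-1,\Sigma)$ with a degenerate 2-split corner $\Cor$ does not scatter some incident wave $\uin\not\equiv 0$. Then $u_\infty\equiv0$, and Rellich's lemma with unique continuation give $\usc\equiv 0$ outside $\Sigma$. Hence $u$ and $v:=\uin$ satisfy the local transmission problem \eqref{eq:inttranscorn} in $\Cor$.

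Since $\uin$ is a nontrivial entire solution of the Helmholtz equation, it cannot vanish to infinite order at the tip. So it has a finite vanishing order $N\ge 0$, and its leading Taylor term $v_N=(a_1e^{iN\theta}+a_2e^{-iN\theta})r^N$ is harmonic and nonzero. When $N=0$ the statement is Theorem~\ref{thm:splitobscatter}, because every degenerate corner is admissible: cases \ref{cs:1split} and \ref{cs:flat} are exactly the admissible cases, and case \ref{cs:partflat} with $\omega_{j+1}=\pi$ is admissible case \ref{cs:fn0}. So the real work is $N\ge1$.

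For $N\ge 1$ we redo the integral identity of Proposition~\ref{prp:splitcorner}. Subtract the equations to get $\Delta(u-v)+k^2\rho(u-v)=-k^2(\rho-1)v$ in $\Cor$ with vanishing Cauchy data. Test against the CGO solutions $w^{(l)}$ of Lemma~\ref{lem:CGO} with potential $k^2\rho$. By Lemma~\ref{lem:bterms} the boundary term is exponentially small, so $\int_\Cor (\rho-1)v\,w^{(l)}\,dx$ is exponentially small. Replace $v$ by $v_N$ and $\rho-1$ by $p_j$ on each split, and use the Gamma-function asymptotics of Lemma~\ref{gammest} with $r^{N+1}$ in place of $r$. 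The coefficient of $h^{(N+2)/\beta}$ must then vanish for $l=1,2$. Integrating $e^{\pm iN\theta}e^{\mp i(N+2)\theta}$ or $e^{\mp iN\theta}e^{\mp i(N+2)\theta}$ over each split, this gives two linear relations in $(a_1,a_2)$ of the form
\[
a_1 S_{1}^{(l)}+a_2 S_{N+1}^{(l)}=0,\quad l=1,2.
\]
Here $S_1$ collects the terms $p_j\,e^{i(\cdot)}\sin\omega_j$ and $S_{N+1}$ collects $p_j\,e^{i(N+1)(\cdot)}\sin((N+1)\omega_j)/(N+1)$, up to phases. We take these relations as the content behind Proposition~\ref{prp:splitcorner} and use its conclusions (i) and (ii) directly.

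We then rule out each degenerate case.
\begin{enumerate}[(i)]
\item Here $p_1=p_2\neq0$. The last assertion of Proposition~\ref{prp:splitcorner}(ii) forces $\omega=\pi$, contradicting $\omega\neq\pi$.
\item Here $\omega=\pi$ and $p_1\neq p_2$. Rotate so that $\theta_0=0$, $\theta_1=\omega_1$, $\theta_2=\pi$. The two relations then become a $2\times2$ system in $(a_1,a_2)$, with entries proportional to $(p_1-p_2)\sin\omega_1$ and $(p_1-p_2)\sin((N+1)\omega_1)$, up to sign and phase. The aim is to show the determinant is nonzero, or that the system forces $|a_1|=|a_2|$ together with an incompatibility. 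Either way we conclude $a_1=a_2=0$, contradicting $v_N\neq0$.
\item Here, say, $p_1\neq0$ and $\omega_2=\pi$. Then $\sin\omega_2=\sin((N+1)\omega_2)=0$, so the $p_2$ contribution drops out entirely. Equation \eqref{eq:Theta} reduces to $|\sin\omega_1|=|\sin((N+1)\omega_1)|/(N+1)$. The inequality $|\sin(n x)|\le n|\sin x|$ holds with equality only when $\sin x=0$, but $\omega_1\in(0,\pi)$, so this is a contradiction.
\end{enumerate}

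The main obstacle is case \ref{cs:flat} (and more generally, making sure alternative (a) $|a_1|=|a_2|$ of Proposition~\ref{prp:splitcorner} does not leave a loophole). Equation \eqref{eq:Theta} reads $|\sin\omega_1-e^{i\pi}\cdots|$, and with $e^{i(N+1)\pi}=(-1)^{N+1}$ it does not obviously fail for every $\omega_1$ and $N$. To close this case I would use the full two-equation system rather than only its modulus consequence. The key is that for $\omega=\pi$ the system should be equivalent to $(p_1-p_2)\,M\,(a_1,a_2)^T=0$, where $M$ has the form
\[
\begin{bmatrix} \sin\omega_1\, e^{i\cdot} & \frac{\sin((N+1)\omega_1)}{N+1}e^{i\cdot}\\ \frac{\sin((N+1)\omega_1)}{N+1}e^{i\cdot} & \sin\omega_1\,e^{i\cdot}\end{bmatrix}.
\]
Its determinant is nonzero by the strict inequality $|\sin((N+1)x)|<(N+1)|\sin x|$ for $x\in(0,\pi)$. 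If the phases prevent exact cancellation, I would fall back to the same strict inequality applied to the moduli of the two rows.
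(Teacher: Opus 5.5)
Your proposal is correct and is essentially the paper's proof. You handle $N=0$ via Theorem~\ref{thm:splitobscatter}, since degenerate corners are admissible. For $N\ge1$ you refute each degenerate case using Proposition~\ref{prp:splitcorner} and the strict inequality $|\sin((N+1)x)|<(N+1)|\sin x|$ on $(0,\pi)$; the only difference is that in case (i) you use the last assertion of Proposition~\ref{prp:splitcorner} where the paper cites \cite{Xiao_2022}, which is equally valid.

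The obstacle you anticipate in case (ii) is not real. When $\omega=\pi$, $\sin\omega_2=\sin\omega_1$ and $e^{i(N+1)\pi}\sin((N+1)\omega_2)=-\sin((N+1)\omega_1)$ (equivalently $c_{n,2}=-c_{n,1}$). So \eqref{eq:Theta} itself reads $|p_1-p_2|\,|\sin\omega_1|=\frac{|p_1-p_2|}{N+1}|\sin((N+1)\omega_1)|$, which is exactly your condition $\det M=0$ and is impossible. No phase cancellation can occur, and because part (i) of the proposition holds unconditionally, the alternative $|a_1|=|a_2|$ leaves no loophole.
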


The local compatibility analysis also yields the following inverse uniqueness result.
\begin{prp}\label{prp:invMedGeneral}
	Suppose that $(\rho^{(j)},\Sigma_j)$, $j=1,2$, are two media producing the same far-field from a single fixed incident wave. Then for each $j=1,2$, $(\rho^{(j)},\Sigma_j)$ does not have any degenerate $2$-split corner on $\Gamma_j$, where $\Gamma_j$ is any connected and relatively-open subset of $\partial\Sigma_j\backslash\Sigma_{j+1}$ that connects to infinity in $\RR^2\backslash(\Sigma_1\cup \Sigma_2)$, with $\Sigma_3:=\Sigma_1$.
\end{prp}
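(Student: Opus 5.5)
The argument mirrors the proof of Proposition~\ref{prp:invSourceGeneral}, with Theorem~\ref{thm:2splitscatter} playing the role of Proposition~\ref{prp:source2}. By symmetry it suffices to treat $\Gamma_1$, and we may assume $\partial\Sigma_1\backslash\Sigma_2\neq\emptyset$. Let $u_j=\uin+u_{\text{sc},j}$ be the total field for the medium $\rho^{(j)}$ with the common incident wave $\uin$.

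\textbf{Step 1: exterior coincidence.} Equal far fields give $u_{\text{sc},1}=u_{\text{sc},2}$ in the unbounded component of $\RR^2\backslash(\Sigma_1\cup\Sigma_2)$, by Rellich's lemma and unique continuation. Hence $u_1=u_2$ there.

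\textbf{Step 2: local transmission problem.} Let $D$ be the connected component of $\Sigma_1^\circ\backslash\Sigma_2$ with $\Gamma_1\subseteq\partial D$. On $\Gamma_1$, which lies outside $\Sigma_2$ and is reached from infinity, $u_2$ is $H^2_{\text{loc}}$ across $\Gamma_1$, since $\rho^{(2)}=1$ near $\Gamma_1$. Using $u_1\in H^2_{\text{loc}}$ as well, the Cauchy data of $u_1$ and $u_2$ agree on $\Gamma_1$. In $D$, $u_1$ solves $\Delta u_1+k^2\rho^{(1)}u_1=0$. Near $\Gamma_1$, $u_2$ solves $\Delta u_2+k^2u_2=0$ in a full neighbourhood, in particular across any corner point of $\Gamma_1$.

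\textbf{Step 3: contradiction at a corner.} Suppose $\Cor$ is a degenerate 2-split corner of $\rho^{(1)}$ at a point of $\Gamma_1$. Taking $u=u_1$ and $v=u_2$ restricted to $\Cor$ (shrinking $R$ so that $\Cor\subset D$ and $\partial\Cor\cap B_R\subset\Gamma_1$), the pair $(u,v)$ satisfies \eqref{eq:inttranscorn}. Moreover $v$ extends as a Helmholtz solution to a neighbourhood of the tip.

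The key point is that Theorem~\ref{thm:2splitscatter} is really a consequence of a local statement: \eqref{eq:inttranscorn} cannot hold with $v\not\equiv0$ near the tip when $\Cor$ is degenerate. I would invoke this local version (as established in the proof of Theorem~\ref{thm:2splitscatter}, via Proposition~\ref{prp:splitcorner} and the $v(0)\neq0$ case) rather than the global statement.

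\textbf{Step 4: the case $v\equiv0$.} This is the main obstacle, and it must be ruled out separately. If $v$ vanished to infinite order at the tip, analyticity of Helmholtz solutions would give $v\equiv0$ near the tip. By unique continuation, $u_2\equiv0$ on the unbounded exterior component. Then $u_{\text{sc},2}=-\uin$ outside a compact set, so $\uin$ would be radiating. But an entire Helmholtz solution that is radiating vanishes identically, contradicting $\uin\neq0$. Hence $v$ has finite vanishing order $N$.

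With $N$ finite, the degenerate cases of Definition~\ref{dfn:splitcorner2special} are excluded by Proposition~\ref{prp:splitcorner}:
\begin{enumerate}[(i)]
\item Case~\ref{cs:1split} ($p_1=p_2\neq0$, $\omega\neq\pi$) contradicts the conclusion $\omega=\pi$ of part (ii) of Proposition~\ref{prp:splitcorner}.
\item Cases~\ref{cs:flat} and \ref{cs:partflat} are handled by substituting $\omega=\pi$, respectively $\omega_{j+1}=\pi$, into \eqref{eq:Theta} together with the rigidity alternatives of part (ii).
\end{enumerate}
If Theorem~\ref{thm:2splitscatter}'s proof already records this local statement for all $N$, I cite it directly. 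Otherwise, verifying these algebraic cases from \eqref{eq:Theta} is where the remaining work lies.
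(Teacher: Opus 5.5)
Your proposal is correct and follows the paper's route. The paper likewise uses Rellich's lemma to get $u_1=u_2$ in the unbounded exterior component, forms the local transmission problem \eqref{eq:inttranscorn} with $(u,v)=(u_1,u_2)$ at the corner, and then reruns the local argument from the proof of Theorem~\ref{thm:2splitscatter}: Proposition~\ref{prp:source2} when $N=0$, and Proposition~\ref{prp:splitcorner} when $N\ge1$. Your Step~4 supplies a detail the paper leaves implicit: since $v$ is here the total field $u_2$ rather than the entire incident wave, it must be checked that $v$ does not vanish identically near the tip, and your argument via radiating entire Helmholtz solutions does this correctly.
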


As in the source scattering case, the local analysis immediately yields uniqueness of the polygonal convex hull. 
\begin{thm}[Unique determination of polygonal convex hulls from a single incident wave]\label{thm:inversemedscattering}
	Let $(\rho,\Sigma)$ be a medium such that $\Sigma$ is a union of disjoint (possibly concave) polygons and each corner of $\Sigma$ is admissible 1-split for $\rho$. Then $\Conv (\Sigma)$ can be uniquely recovered from the corresponding far-field $u_\infty$ for a fixed wavenumber $k$ and incident field $\uin$.
\end{thm}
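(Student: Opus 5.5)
The proof will mirror that of Theorem~\ref{thm:inversescattering}, with the medium local result playing the role of Proposition~\ref{prp:source2}. Suppose $(\rho^{(j)},\Sigma_j)$, $j=1,2$, both satisfy the hypotheses and produce the same far-field $u_\infty$ for the same $\uin$ and $k$. Assume, to the contrary, that $\Conv(\Sigma_1)\neq\Conv(\Sigma_2)$, and without loss of generality $\Conv(\Sigma_1)\not\subseteq\Conv(\Sigma_2)$. Since both hulls are convex polygons, there is a vertex $x'$ of $\Conv(\Sigma_1)$ with $x'\notin\Conv(\Sigma_2)$. This $x'$ is a vertex of $\Sigma_1$, hence by assumption an admissible 1-split corner of $\rho^{(1)}$. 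It lies at positive distance from $\Sigma_2$ and connects to infinity in $\RR^2\backslash(\Sigma_1\cup\Sigma_2)$.

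Let $u_j$ be the total fields. By Rellich's lemma and unique continuation, $u_1=u_2$ in the unbounded component of $\RR^2\backslash(\Sigma_1\cup\Sigma_2)$. Choose $R$ small so that $B_R(x')\cap\Sigma_2=\emptyset$, and let $\Cor$ be the corner of $\Sigma_1$ at $x'$. Set $u:=u_1$ and $v:=u_2$ in $\Cor$. Then $\Delta u+k^2\rho^{(1)}u=0$ in $\Cor$. Since $\rho^{(2)}=1$ near $x'$, $v$ solves $\Delta v+k^2v=0$ in all of $B_R(x')$, so it extends across the corner as required. The Cauchy data of $u$ and $v$ agree on $\partial\Cor\cap B_R$, because both equal $u_1=u_2$ from the exterior side. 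Thus $(u,v)$ solves \eqref{eq:inttranscorn}.

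It remains to contradict Proposition~\ref{prp:splitcorner}, viewing the admissible 1-split corner as a 2-split corner with $p_1=p_2\neq0$ (any interior ray splits it). In the notation of Definition~\ref{dfn:splitcorner2special}, this corner is degenerate of type~\ref{cs:1split}. If $v$ does not vanish identically near $x'$, it has a finite vanishing order $N$, and the final clause of Proposition~\ref{prp:splitcorner}(ii) forces $\omega=\pi$. This contradicts $x'$ being a genuine vertex. The same conclusion follows directly from Theorem~\ref{thm:2splitscatter} and Proposition~\ref{prp:invMedGeneral} applied with $\Gamma_1$ a neighbourhood of $x'$ in $\partial\Sigma_1$; I would invoke Proposition~\ref{prp:invMedGeneral} to keep the argument short.

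The main obstacle is the case where $v$ vanishes to infinite order at $x'$. Then $v\equiv0$ near $x'$ by analyticity, so $u_2\equiv0$ near $x'$ and hence $u_2\equiv0$ in the exterior. That would give $\usc=-\uin$, which cannot satisfy the radiation condition unless $\uin\equiv0$. I would rule this out with that short argument, assuming $\uin\not\equiv0$, before applying the local result. I also need $\Conv(\Sigma_1)$ to be a polygon whose vertices are vertices of $\Sigma_1$; this holds because $\Sigma_1$ is a finite union of polygons.
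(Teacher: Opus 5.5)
Your proposal is correct and follows the paper's route. A vertex of $\Conv(\Sigma_1)$ outside $\Conv(\Sigma_2)$ is an admissible 1-split corner, hence a degenerate 2-split corner of type (i), and it connects to infinity, which contradicts Proposition~\ref{prp:invMedGeneral}. Your explicit exclusion of $u_2\equiv0$ near $x'$, via Rellich's lemma for $\uin\not\equiv0$, makes precise a point the paper's proof leaves implicit. Routing through Proposition~\ref{prp:invMedGeneral} (that is, Theorem~\ref{thm:2splitscatter}, which treats $N=0$ via Theorem~\ref{thm:splitobscatter}) is also the safer choice: the determinant argument behind the last clause of Proposition~\ref{prp:splitcorner} is vacuous when $N=0$.
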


\subsection{Proof of the main results for medium scattering and inverse scattering}\label{ssec:obstscatterthms}
Theorem~\ref{thm:splitobscatter}, which concerns incident waves that do not vanish at a admissible corner tip, is a direct consequence of Proposition~\ref{prp:source2}.
\begin{proof}[Proof of Theorem~\ref{thm:splitobscatter}]
	Let $(\rho-1,\Sigma)$ be a given medium with an admissible 2-split corner $\mathcal{K}$. We assume, to the contrary, that there exists an incident wave $v=\uin$ of vanishing order 0 that is not scattered by $\rho$. Then the solution $u\in H^1_{\text{loc}}(\RR^2)$ to the forward scattering problem \eqref{eq:mediumscattering} induced by $v$ is such that the pair $(u,v)\in H^1(\mathcal{K})\times H^1(\mathcal{K})$ solves the problem \eqref{eq:inttranscorn} in the corner $\mathcal{K}$, and $v$ solves $(\Delta+k^2)v=0$ in all of $\mathbb{R}^2$. As a consequence, $w:=u-v$ satisfies \eqref{eq:sourcelocprob} with $f:=-k^2(\rho-1)v$. 
	Since $v$ is analytic and nonvanishing at the corner tip, multiplication by $v$ preserves both the limiting values and the split structure of $\rho-1$. Consequently, $f=-k^2(\rho-1)v$ also possesses an admissible 2-split corner. However, this contradicts Proposition~\ref{prp:source2}.
\end{proof}

Next we prove Theorem~\ref{thm:2splitscatter}, which is a consequence of Proposition~\ref{prp:splitcorner}.
\begin{proof}[Proof of Theorem~\ref{thm:2splitscatter}]
	Let $(\rho,\Sigma)$ be a medium with a degenerate $2$-split corner $\Cor=\Cor_{\theta_0,\theta_2, R}$, with $\omega_j=\theta_j-\theta_{j-1}$ the $j$-th split aperture, $j=1,2$, and $(p_1,p_2)$ the limiting value of $\rho-1$ at the corner. 
	Assume the contrary that there exist an incident wave $v$ that is not scattered by $(\rho,\Sigma)$.
	Recall that any degenerate $2$-split corner is also admissible. Then for incident waves that do not vanish at the corner tip, the result is a consequence of Theorem~\ref{thm:splitobscatter}.
	In the rest of the proof, we assume that the vanishing order of the incident wave is $N\in\NN_+$.
	
	Suppose $p_1=p_2$ and $\omega:=\omega_1+\omega_2\neq\pi$. Then $\Cor$ is in fact an admissible $1$-split corner of $\rho$. It is known in this case that all incident waves will be scattered nontrivially; see for example, \cite{Xiao_2022}. 
	If $p_1\neq p_2$ and $\omega=\pi$, \eqref{eq:Theta} then reads
	\begin{equation}\label{eq:prf_sin}
		\sin\omega_1=\frac{1}{N+1}\sin(N+1)\omega_1,
	\end{equation}
	which is only possible when $\omega_1=\pi$, a contradiction to $\omega=\pi$.
	We are left with Case~\ref{cs:partflat} in Definition~\ref{dfn:splitcorner2special}. Assume without loss of generality that $p_1\neq0$ and $\omega_2=\pi$. In this case \eqref{eq:Theta} is equivalent to \eqref{eq:prf_sin}, which contradicts $\omega_2=\pi$.
\end{proof}

Next we present the proof of Proposition~\ref{prp:invMedGeneral}.
\begin{proof}[Proof of Proposition~\ref{prp:invMedGeneral}]
	Denote by $G$ the unbounded connected component of $\RR^2\backslash (\Sigma_1\cup \Sigma_2)$. Then Rellich's lemma implies that $u_1=u_2$ in $G$, where $u_j$ is the solution to the forward scattering problem \eqref{eq:mediumscattering} for the medium $\rho_j$ and incident wave $\uin$.
	
	Let $\Gamma_1$ be a connected and relatively-open subset of $\partial\Sigma_1\backslash\Sigma_{2}$ that connects to infinity in $\RR^2\backslash(\Sigma_1\cup \Sigma_2)$, and suppose there is a (possibly flat) corner $\Cor$ of $(\rho^{(1)},\Sigma_1)$ on $\Gamma_1$. Then there exists $R>0$ such that $(u,v):=(u_1,u_2)$ satisfies \eqref{eq:inttranscorn}. If $\rho^{(1)}$ has a degenerate $2$-split corner on $\Gamma_1$, we can then follow the proof of Theorem~\ref{thm:2splitscatter} and obtain a contradiction.
\end{proof}

Theorem \ref{thm:inversemedscattering} is in fact a direct consequence of Proposition~\ref{prp:invMedGeneral}.
\begin{proof}[Proof of Theorem \ref{thm:inversemedscattering}]
	Let $(\rho_j,\Sigma_j)$, $j=1,2$, be two media producing the same far-field from a fixed incident wave $\uin$, where $\Sigma_j$ is a union of disjoint (possibly concave) polygons, and suppose that each corner of $\Sigma_j$ is admissible 1-split for $\rho_j$. Assume for the sake of contradiction that $\Conv (\Sigma_1)\neq\Conv (\Sigma_2)$. 
	Then, up to a swap of indices, there exists a corner $\Cor$ of $\rho_1$ whose corner tip lies on $\partial\Sigma_1\backslash\Sigma_2$ and connects to infinity in $\RR^2\backslash(\Sigma_1\cup\Sigma_2)$. Moreover, $\Cor$ is an admissible $1$-split corner of $\rho_1$, which contradicts the conclusion of Proposition~\ref{prp:invMedGeneral}. 
\end{proof}

The proof of Proposition~\ref{prp:splitcorner} follows a similar strategy as Proposition~\ref{prp:source2}. 
Applying the integral identity \eqref{eq:wavescatterint}, together with CGO solutions and the local Taylor expansion of the incident field, yields the required compatibility conditions.

Given $(u,v)$ satisfying \eqref{eq:inttranscorn}, we have for any solution $w\in H^1(\Cor)$ to $(\Delta+k^2\rho )w=0$ in $\mathcal{K}$ that
\begin{equation}\label{eq:wavescatterint}
    \begin{split}
    	k^2\int_{\mathcal{K}} (\rho-1) v w\;dx
    &=\int_{\partial \mathcal{K}} w\partial_\nu v -v\partial_\nu w\;dS \\
    &= \int_{\partial \mathcal{K}\cap B_R}w\partial_\nu (v-u)- (v-u)\partial_\nu w\;dS .
    \end{split}
\end{equation}
Similar to the proof of Proposition~\ref{prp:source2}, we make use of the CGO solutions $\wl$ to $\Delta w+k^2\rho  w=0$ established in Lemma \ref{lem:CGO} with a parameter $h\ll 1$, and consider the asymptotic form of \eqref{eq:wavescatterint} as $h\to0$.
With the same arguments used in the proof of Lemma~\ref{lem:bterms} we obtain
\begin{equation}\label{eq:wavescatterbint}
	\int_{\partial \mathcal{K}\cap \partial B_R}\wl\partial_\nu (v-u)- (v-u)\partial_\nu \wl\;dS=O(h^{-1}e^{-\frac{1}{h} R^\beta \delta}).
\end{equation}

Regarding the asymptotics of the LHS in \eqref{eq:wavescatterint}, we have the following result. 
\begin{lem}\label{lem:Asymprho}
	Let $\rho\in L^2(\RR^2)$ satisfy $|\rho(x)-\rho(0)|\leq C|x|^\alpha$ in $ \mathcal{K}_{\theta_1,\theta_2,R}$ for $C\geq0$, $\alpha\in(0,1)$, and let $v$ be a nontrivial solution to $\Delta v+k^2v=0$ in $B_R$ of vanishing order $N\in\mathbb{N}$. 
	Then, for some $\varepsilon\in\RR_+$ independent of $h$,
	\begin{equation*}
		\int_{\Cor_{\theta_{1},\theta_{2}},\varepsilon}(\rho-1)v\wl dx=\frac{\Gamma((N+2)/\beta)}{i\beta}(\rho(0)-1)B_{N,l}h^{(N+2)/\beta}+o(h^{(N+2)/\beta})
		\qquad \mbox{as $h\to 0$},
	\end{equation*}
	where  $B_{N,1}=a_1 c_{0} +a_2c_{N}$ and $B_{N,2}=a_1 \overline{c_{N}} +a_2\overline{c_{0}}$ with 
	\begin{equation}\label{eq:CNl}
		c_{n}=\frac{e^{-i2(n+1)\theta_2}-e^{-i2(n+1)\theta_1}}{-2(n+1)},
	\end{equation}
	and $a_l$ are the coefficients in the first nonzero ($N$th-) term $v_N=(a_1e^{iN\theta}+a_2e^{-iN\theta})r^N$ in the Taylor expansion of $v$. 
\end{lem}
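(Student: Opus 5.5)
Mirroring the proof of Lemma~\ref{lem:Asymp}, I split the integrand into a leading piece and three remainders. Write $v=v_N+R_N$ with $|R_N(x)|\le C|x|^{N+1}$ near $0$; this holds since $v$ is real-analytic in $B_R$. Also write $\rho-1=(\rho(0)-1)+(\rho-\rho(0))$ and $\wl=e^{-\Phi_l/h}(1+w_h)$. Then
\[
\int_{\Cor_{\theta_1,\theta_2,\varepsilon}}(\rho-1)v\wl\,dx=(\rho(0)-1)\int_{\Cor_{\theta_1,\theta_2,\varepsilon}} v_N e^{-\Phi_l/h}\,dx+E_1+E_2+E_3 .
\]
Here $E_1$ contains $R_N$, $E_2$ contains $\rho-\rho(0)$ times $v$, and $E_3$ contains $w_h$.

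To evaluate the leading term, I use polar coordinates with $v_N=(a_1e^{iN\theta}+a_2e^{-iN\theta})r^N$. For $l=1$ the phase is $r^\beta e^{i\beta\theta}$. Lemma~\ref{gammest} with $b_1=N+2$, $b_0=\beta$ and $\mu=e^{i\beta\theta}/h$ gives
\[
\int_0^\varepsilon r^{N+1}e^{-r^\beta e^{i\beta\theta}/h}\,dr=\frac{\Gamma((N+2)/\beta)}{\beta}h^{(N+2)/\beta}e^{-i(N+2)\theta}+O\bigl(h^{c}e^{-\delta\varepsilon^\beta/2h}\bigr),
\]
uniformly in $\theta$, using \eqref{eq:delineq}. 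Multiplying by $a_1e^{iN\theta}+a_2e^{-iN\theta}$ and integrating over $(\theta_1,\theta_2)$ produces the two angular integrals $\int e^{-2i\theta}d\theta$ and $\int e^{-2i(N+1)\theta}d\theta$. These equal $i c_0$ and $i c_N$ by \eqref{eq:CNl}, since $\int_{\theta_1}^{\theta_2}e^{-2i(n+1)\theta}d\theta=i\,c_n$ up to the sign convention, which I will check. Matching against $\frac{1}{i}B_{N,1}$ fixes the constant. For $l=2$ the phase is conjugated, so the exponentials become $e^{+2i\theta}$ and $e^{2i(N+1)\theta}$. Pairing $a_1$ with $e^{2i(N+1)\theta}$ gives $\overline{c_N}$ and pairing $a_2$ with $e^{2i\theta}$ gives $\overline{c_0}$, which yields $B_{N,2}$. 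The factor $i$ versus $1/i$ is handled by the same sign bookkeeping as in \eqref{eq:leading}.

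The remainders are estimated as follows.
\begin{itemize}
\item $E_1$: by \eqref{eq:Estr^a} with exponent $N+1$, $|E_1|\le Ch^{(N+3)/\beta}$.
\item $E_2$: since $|\rho-\rho(0)||v|\le C|x|^{N+\alpha}$, \eqref{eq:Estr^a} gives $|E_2|\le Ch^{(N+2+\alpha)/\beta}$.
\end{itemize}
Both are $o(h^{(N+2)/\beta})$.

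I expect $E_3$ to be the main obstacle. The crude bound \eqref{eq:whterm} only gives $h^{1+2\lambda/\beta}$, which beats $h^{(N+2)/\beta}$ only when $N=0$. I will therefore keep the weight $|v|\le C|x|^N$ inside Hölder's inequality:
\[
|E_3|\le C\|w_h\|_{L^{1/(1-\lambda)}}\Bigl(\int |x|^{N/\lambda}e^{-\delta|x|^\beta/(\lambda h)}dx\Bigr)^{\lambda}\le Ch\cdot h^{(N\,+\,2\lambda)/\beta}.
\]
The exponent $1+(N+2\lambda)/\beta$ exceeds $(N+2)/\beta$ exactly when $\lambda>1-\beta/2$, the same constraint as before. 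Since Lemma~\ref{lem:CGO} allows any $p_1\in(1,\infty)$, such a $\lambda\in(1-\beta/2,1)$ is admissible, and $E_3=o(h^{(N+2)/\beta})$. Combining the leading term with these bounds proves the lemma.
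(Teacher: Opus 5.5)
Your proposal is correct and follows the paper's proof essentially step for step. It uses the same decomposition into the leading term $(\rho(0)-1)v_Ne^{-\Phi_l/h}$, evaluated with Lemma~\ref{gammest}. The remainders from $v-v_N$ and $\rho-\rho(0)$ are controlled by the weighted Laplace-type bound from the proof of Lemma~\ref{lem:Asymp}. The $w_h$ term is handled exactly as the paper does, by keeping $|v|\le C|x|^N$ inside H\"older's inequality with $\lambda\in(1-\beta/2,1)$.

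When you do the sign check you deferred, note that $\int_{\theta_1}^{\theta_2}e^{-2i(n+1)\theta}\,d\theta=c_n/i$ but $\int_{\theta_1}^{\theta_2}e^{2i(n+1)\theta}\,d\theta=-\overline{c_n}/i$. So for $l=2$ the leading coefficient is actually $-\frac{\Gamma((N+2)/\beta)}{i\beta}(\rho(0)-1)B_{N,2}$, which is the factor $(-1)^{l-1}$ familiar from Lemma~\ref{lem:Asymp}. This is a sign slip in the statement itself, not in your argument, and it is harmless for Proposition~\ref{prp:splitcorner}, since only the vanishing of these coefficients is used there.
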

\begin{proof}[Proof of Lemma~\ref{lem:Asymprho}]
	We separate the integral into four parts.
	First, 	applying \eqref{eq:Estr^a} we obtain that
	\begin{equation*}
		\int_{\mathcal{K}_{\theta_1,\theta_2,\varepsilon}}v_{N}|\rho-\rho(0)| e^{-\Phi_l/h}\;dx
		\le C h^{(2+N+\alpha)/\beta},
	\end{equation*}
	and
	\begin{equation*}
		\int_{\mathcal{K}_{\theta_1,\theta_2,\varepsilon}}\rho|v-v_{N}| e^{-\Phi_l/h}\;dx
		\le C h^{(3+N)/\beta}.
	\end{equation*}
	Second, similar to \eqref{eq:whterm} we can deduce for any $\lambda\in(0,1)$ that
	\begin{equation*}
		\begin{split}
			&\left|\int_{\mathcal{K}_{\theta_1,\theta_2,\varepsilon}} (\rho-1) v w_h e^{-\Phi_l/h}\;dx \right| \\
			&\qquad\leq \|\rho-1\|_{L^\infty}\|w_h\|_{L^{\frac{1}{1-\lambda}}(\mathcal{K}_{\theta_1,\theta_2,\varepsilon})}\PAre{\int_{\mathcal{K}_{\theta_1,\theta_2,\varepsilon}} |x|^{N/\lambda} e^{-\delta |x|^\beta/(\lambda h)}\;dx}^{\lambda} \\
			&\qquad\leq C h^{1+N/\beta+2\lambda/\beta},
		\end{split}
	\end{equation*}
	which is a term of order $o(h^{(N+2)/\beta})$ so long as we take $\lambda\in(1-\beta/2,1)$.
	Combining the above three estimates, we conclude that all error terms are of order $o(h^{(N+2)/\beta})$. Hence
	\begin{equation}\label{eq:intoh}
		\int_{\Cor_{\theta_{1},\theta_{2}},\varepsilon}(\rho-1)v\wl - (\rho(0)-1)v_{N}e^{-\Phi_l/h} dx =o(h^{(N+2)/\beta}).
	\end{equation}

	We are left with the integral term concerning $(\rho(0)-1)v_{N}e^{-\Phi_l/h}$. Similar to \eqref{eq:leading}, applying Lemma~\ref{gammest} we can calculate (see also \cite[Lemma 4.6]{Xiao_2022})
	\begin{equation}\label{eq:harmhompoly}
		\int_{\mathcal{K}_{\theta_1,\theta_2,\varepsilon}} r^Ne^{\pm iN\theta} e^{-\Phi_l/h}\;dx = -i\frac{\Gamma((N+2)/\beta)}{\beta}C_{N,l,\pm}h^{(N+2)/\beta}+o(h^{(N+2)/\beta}),
	\end{equation}
		where 
		\begin{equation*}
			C_{N,l,\pm}=\frac{ e^{i[((-1)^l\pm 1)N+(-1)^l 2]\theta_2}-e^{i[((-1)^l\pm 1)N+(-1)^l 2]\theta_1}}{((-1)^l\pm 1)N+(-1)^l 2}.
		\end{equation*}
	The proof can be then concluded by combining \eqref{eq:intoh}, \eqref{eq:harmhompoly}, and the expression $v_N=(a_1e^{iN\theta}+a_2e^{-iN\theta})r^N$.
\end{proof}

Now that we have the asymptotics of \eqref{eq:wavescatterint}, we are ready to prove Proposition \ref{prp:splitcorner}:
\begin{proof}[Proof of Proposition~\ref{prp:splitcorner}]
	Let $\varepsilon\in(0,R)$ be such that $v$ extends analytically as a solution to $\Delta v+k^2 v=0$ in $B_\varepsilon(0)$, and such that $|\rho_j-\rho_j(0)|=O(|x|^{\alpha_j})$ in $\overline{\mathcal{K}}_{\theta_j,\theta_{j+1},\varepsilon}$ for $j=1,2$.  
	Applying Lemma~\ref{lem:Asymprho} together with the boundary estimate \eqref{eq:wavescatterbint}, identity \eqref{eq:wavescatterint} becomes
	\begin{equation*}
		\int_{\mathcal{K}} (\rho-1) v w^{(l)}\;dx=\pare{p_1B_{N,l,1}+p_2B_{N,l,2}} h^{(N+2)/\beta}+o(h^{(N+2)/\beta}),
	\end{equation*}
	where $p_j=\rho_j(0)-1$, $B_{N,1,j}=a_1 c_{0,j} +a_2c_{N,j}$ and $B_{N,2,j}=a_1 \overline{c_{N,j}} +a_2\overline{c_{0,j}}$, $c_{n,2}=c_{n}$ defined in \eqref{eq:CNl}, and $c_{n,1}$ is the same but replacing $(\theta_2,\theta_1)$ with $(\theta_1,\theta_0)$.
	The left-hand side is exponentially small by \eqref{eq:wavescatterbint}, whereas Lemma~\ref{lem:Asymprho} shows that it admits a polynomial asymptotic expansion.
	Therefore the leading coefficient of 	$h^{(N+2)/\beta}$ must vanish.
	In other words,
	\begin{equation*}
		p_1B_{N,l,1}+p_2B_{N,l,2}=0,\qquad l=1,2.
\end{equation*}
	Equivalently, we obtain the linear system
	\begin{equation}\label{eq:linsys}
		\begin{split}
			0&=a_1(p_1c_{0,1}+p_2c_{0,2})+a_2(p_1c_{N,1}+p_2c_{N,2}), \\
		0&= a_1(p_1\overline{c_{N, 1}}+p_2\overline{c_{N, 2}})+a_2(p_1\overline{c_{0,1}}+p_2\overline{c_{0,2}}).
		\end{split}
	\end{equation}
	Since $(a_1,a_2)\neq 0$, the coefficient matrix of \eqref{eq:linsys} must be singular.
	Therefore
	\begin{align*}
		0&=\det\begin{bmatrix}
			p_1c_{0,1}+p_2c_{0,2} & p_1c_{N,1}+p_2c_{N,2} \\
		p_1\overline{c_{N,1}}+p_2\overline{c_{N,2}} & p_1\overline{c_{0,1}}+p_2\overline{c_{0,2}}
		\end{bmatrix}
	=|p_1c_{0,1}+p_2c_{0,2}|^2-|p_1c_{N,1}+p_2c_{N,2}|^2.
	\end{align*}
	Substituting the explicit expressions for $c_{n,j}$ we deduce \eqref{eq:Theta}.
	
	Combining the first equation of \eqref{eq:linsys} with the complex conjugate of the second produces a homogeneous system for $(p_1,p_2)$.
	Hence if the vector $(p_1,p_2)\neq 0$, then
	\begin{equation*}
		\begin{split}
			0&=\det\begin{bmatrix}
				a_1c_{0,1}+a_2c_{N, 1} & a_1c_{0,2}+a_2c_{N, 2}\\
				\overline{a_1}c_{N, 1,}+\overline{a_2}c_{0,1} & \overline{a_1}c_{N, 2}+\overline{a_2}c_{0,2}
			\end{bmatrix} \\
			&=(|a_1|^2-|a_2|^2)(c_{0,1}c_{N,2}-c_{0,2}c_{N,1}).
		\end{split}
	\end{equation*}  
	Thus, we must have $|a_1|=|a_2|$ or
	\begin{equation*}
		\begin{split}
			(e^{-i2\theta_1}-e^{-i2\theta_0})(e^{-i2(N+1)\theta_2}-e^{-i2(N+1)\theta_1})
			=(e^{-i2\theta_2}-e^{-i2\theta_1})(e^{-i2(N+1)\theta_1}-e^{-i2(N+1)\theta_0}).
		\end{split}
	\end{equation*}
	The latter relation is equivalent to 
	\begin{equation*}
		\sin\omega_1\sin\Pare{(N+1)\omega_2}
		=e^{iN\omega}\sin\omega_2\sin\Pare{(N+1)\omega_1},
	\end{equation*}
	namely,
	\begin{equation*}
		\omega=m\pi/N,\qquad\mbox{for some $m\in\NN_+$} ,
	\end{equation*}
	and
	\begin{equation*}
		\sin\omega_1\sin(N+1)\omega_2
		=(-1)^m\sin\omega_2\sin(N+1)\omega_1,
	\end{equation*}
	where $\omega_{j}=\theta_j-\theta_{j-1}$ and $\omega=\theta_2-\theta_0$.

Finally, if $p_1=p_2\neq 0$, the system \eqref{eq:linsys} reads
\begin{equation*}
	\begin{split}
		0&=e^{-i2\theta_0}(1-e^{-i2\omega})a_1+\frac{e^{-i2(N+1)\theta_0}(1-e^{-i2(N+1)\omega})}{N+1}a_2, \\
		0&=e^{i2\theta_0}(1-e^{i2\omega})a_1+\frac{e^{i2(N+1)\theta_0}(1-e^{i2(N+1)\omega})}{N+1}a_2.
	\end{split}
\end{equation*}
As a linear system for the unknown $(a_1,a_2)$, the determinant of the coefficient matrix must be zero, which implies
\begin{equation}\label{eq:prf}
	(N+1)|1-e^{-i2\omega}|=|1-e^{-i2(N+1)\omega}|.
\end{equation}
However, since
\begin{equation*}
	1-e^{-i2(N+1)\omega}=(1-e^{-i2\omega})\sum_{m=0}^{N}e^{-i2m\omega},
\end{equation*}
the identity \eqref{eq:prf} is satisfied only if $|e^{-i2m\omega}|=1$, that is, $\omega=\pi$ for the aperture $\omega\in(0,2\pi)$.
\end{proof}

\paragraph*{Acknowledgments}
The research of the authors was partially supported by the NSF grant DMS-2307737.

\bibliographystyle{abbrv}
\bibliography{splitcorner.bib}
\end{document}